\numberwithin{equation}{section}
\setlist[enumerate]{itemsep = -0.2em}
\titleformat{\subsection}[runin]
  {\normalfont\normalsize\bfseries}{\thesubsection}{0.3em}{#1.}
\definecolor{MyDarkBlue}{rgb}{0,0.08,0.50}  
\definecolor{BrickRed}{rgb}{0.65,0.08,0}
\newtheorem{Lemma}{Lemma}[section]
\newtheorem{Proposition}[Lemma]{Proposition}
\newtheorem{Theorem}[Lemma]{Theorem}
\newtheorem{Remark}[Lemma]{Remark}
\newtheorem{Corollary}[Lemma]{Corollary}
\newtheorem{Definition}[Lemma]{Definition}
\newtheorem{Condition}[Lemma]{Condition}
\newcommand{\sub}[1]{\boldsymbol{#1}}
\newcommand{\R}{\mathbb{R}}
\newcommand{\N}{\mathbb{N}}
\newcommand{\I}{\mathbbm{1}}
\newcommand{\pr}{\mathbb{P}}
\newcommand{\E}{\mathbb{E}}
\newcommand{\G}{\mathcal{G}}
\newcommand{\eqn}[1]{\begin{equation} #1 \end{equation}}
\newcommand{\e}{\mathrm{e}}
\newcommand{\CBP}{\mathrm{CBP}}
\newcommand{\La}{\mathcal{L}}
\newcommand{\CTBP}{\mathrm{CTBP}}
\newcommand{\CTBPs}{\mathrm{CTBPs}}
\newcommand{\F}{\mathcal{F}}
\newcommand{\Din}{D^{\scriptscriptstyle(\mathrm{in})}}
\newcommand{\sss}{\scriptscriptstyle}
\begin{document}

\title{\bfseries\uppercase{\large from trees to graphs: collapsing \\ continuous-time branching processes} }


\author[a,1]{Alessandro Garavaglia}
\author[a,2]{Remco van der Hofstad}
\affil[a]{\footnotesize Department of Mathematics and
    Computer Science, Eindhoven University of Technology, 5600 MB Eindhoven, The Netherlands}

\vspace{0.2cm}
\affil[$ $]{{\itshape email address}: $^1$a.garavaglia@tue.nl, $^2$rhofstad@win.tue.nl}

\date{}

\maketitle

\vspace{-1cm}
\begin{abstract}
Continuous-time branching processes ($\CTBPs$) are powerful tools in random graph theory, but are not appropriate to describe real-world networks, since they produce trees rather than (multi)graphs. In this paper we analyze collapsed branching processes (CBPs), obtained by a collapsing procedure on $\CTBPs$, in order to define multigraphs where vertices have fixed out-degree $m\geq 2$. A key example consists of preferential attachment models (PAMs), as well as generalized PAMs where vertices are chosen according to their degree and age.
We identify the degree distribution of $\CBP\mathrm{s}$, showing that it is closely related to the limiting distribution of the $\CTBP$ before collapsing. In particular, this is the first time that $\CTBPs$ are used to investigate the degree distribution of PAMs beyond the tree setting.
\end{abstract}

\thispagestyle{plain}

\section{Introduction and main results}
\label{sec-intr}

\subsection{Our model and main result}
\label{sec-model}
The main result of this paper is the definition of multigraphs from continuous-time branching processes ($\CTBP$), through a procedure that we call {\em collapsing}. We analyze the case where we collapse a fixed number $m\in\N$ of individuals. The heuristic idea is to consider the tree defined by the branching process, and collapse or merge together $m$ different nodes in the tree to create a vertex in the multigraph. Throughout this paper, we will consider an {\itshape individual} to be a node in the tree of the branching process, while a {\itshape vertex} is a node in the multigraph  obtained by collapsing.

We recall now some notation on $\CTBPs$. For a more detailed introduction, we refer to Section \ref{sec-bptheory}. We consider a branching process $\sub{\xi}$ defined by a birth process $(\xi_t)_{t\geq 0}$. In these models, individuals produce children according to i.i.d. copies of the process $(\xi_t)_{t\geq 0}$. Usually, individuals in the branching populations are denoted by $x = \emptyset x_1\cdots x_k$ (see Definition \ref{def-BP}). In this paper, we will not denote individuals with their position in the genealogical tree, but rather by their birth order. Denote the sequence of birth times of individuals in the branching population by $(\tau_n)_{n\in\N}$.

Fix $m\in\N$. We denote $(n,j) = m(n-1)+j$, for $j=1,\ldots,m$. We now give the precise definition of the collapsed branching process:

\begin{Definition}[Collapsed branching process]
\label{def-collBP}
Consider a branching process $\sub{\xi}$. Then, a {\em collapsed branching process} is a random process $(\CBP^{\sss(m)}_t)_{t\geq0}$, for which, for every $t\geq 0$,
$\CBP^{\sss(m)}_t$ is a directed multigraph with adjacency matrix $(g_{x,y}(t))_{x,y\in\N}$, where
\eqn{
	g_{x,y}(t) = \sum_{j=1}^{m}\I_{\{(x,j)\rightarrow (y,1),\ldots,(y,m)\}}\I_{\R^+}(t-\tau_{(x,j)}),
}
and $\{(x,j)\rightarrow(y,1),\cdots,(y,m)\}$ is the event that there is a directed edge between individual $(x,j)$ and one of the individuals  $(y,1),\ldots,(y,m)$ in the tree defined by the branching process at time $t$. We denote the size of $\CBP^{\sss(m)}_t$ by $N^{\sss(m)}(t)$.
\end{Definition}

As the reader can see from the definition, the collapsing procedure combines $m$ individuals together with their edges to create a vertex, and there is an  edge between two vertices if and only if there is an edge between a pair of individuals collapsed to create the two vertices.  $\CBP^{\sss(m)}_t$ is a graph where every vertex (except vertex 1) has out-degree $m$. Self-loops and multiple edges are allowed (see Figure \ref{fig:tree} for an example of CBP).

We consider the birth time of the vertex $n$ in the multigraph to be
$\tau_{(n,1)} = \tau_{m(n-1)+1}$.
Thus, vertex $n$ in $\CBP^{\sss(m)}$ is considered alive when $(n,1)$ is alive in $\sub{\xi}$. Notice that when $n$ is born, it has only one out-edge, because the other individuals $(n,2),\ldots,(n,m)$ are not yet alive. Clearly, the in-degree at time $t$ of a vertex $n$ in $\CBP^{\sss(m)}$ is given by
$$
	D^{\scriptscriptstyle(\mathrm{in})}_n(t) = \sum_{j=1}^m \xi^{(n,j)}_{t-\tau_{(n,j)}}.
$$ 
The main difference between CBPs and Preferential Attachment Models (PAMs) is that CBPs are defined in continuous-time, while time in PAMs is discrete. Heuristically, discrete time in PAMs is described as the time unit at which a nex vertex is added to the graph (see for instance \cite{ABrB}, \cite[Chapter 8]{vdH1}, \cite{Bol01}), while in CBPs time is continuous and new vertices are born at exponential rate (\cite[Theorem A]{RudValko}, \cite[Theorem 5.4]{Nerman},  Theorem \ref{the-BPmain} below).
\begin{figure}[h]
\centering
 \begin{subfigure}[b]{0.55\textwidth}
        \includegraphics[width= \textwidth]{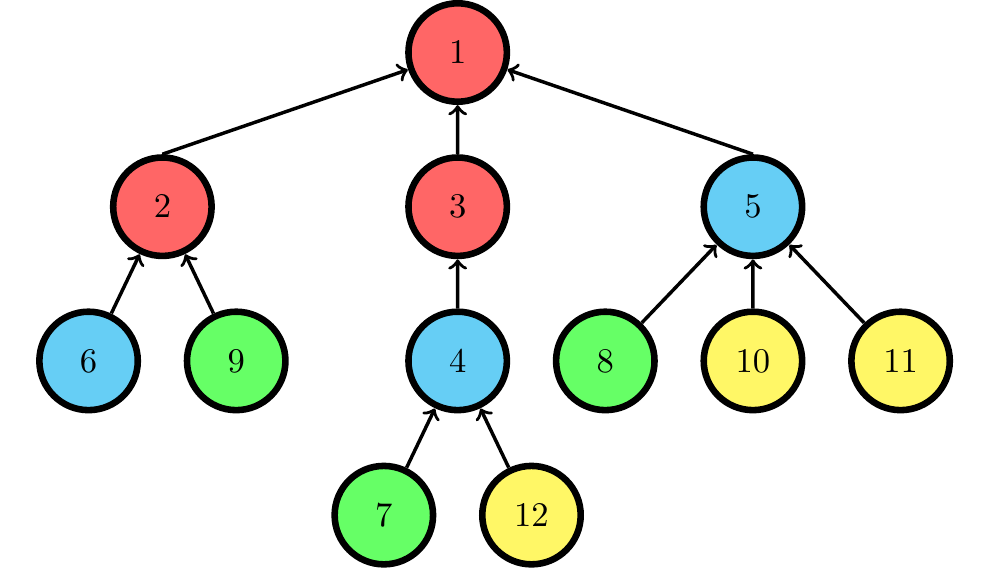}
				\caption{Branching process tree}
    \end{subfigure}
		\hspace{1cm}
		\begin{subfigure}[b]{0.33\textwidth}
        \includegraphics[width=\textwidth]{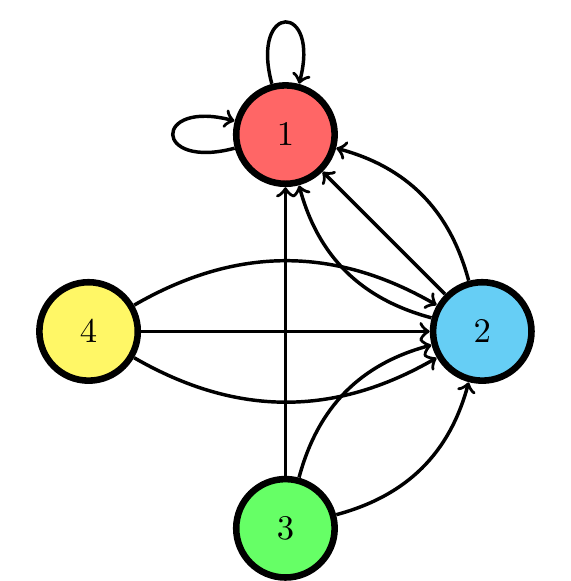}
        \caption{Collapsed branching process}
    \end{subfigure}
		\caption{An example of a collapsed branching process where vertices have fixed out-degree $m=3$.}
		\label{fig:tree}
\end{figure}

\subsection{Results}
\label{sec-results}
 
Our main goal is to show that we can define a multigraph from a CTBP, and analyze its rate of growth as well as the limiting degree distribution. 

Our results are a first attempt to create a link between trees and multigraphs in continuous time. The collapsing procedure creates difficulties though. For instance, we consider different individuals to create a vertex, each one of them having its own birth time. This has to be taken into account to investigate the degree evolution of a vertex in $\CBP$.

Here we state the result on the limiting degree distribution of CBPs, relying on properties of CTBPs as formulated in Theorem \ref{theo-general} below:

\begin{Theorem}[Limiting degree distribution of CBPs]
\label{the-limitCBP}
Consider a branching process $\sub{\xi}$, and fix $m\in\N$. Denote the size of $\CBP_t^{\sss(m)}$ by $N^{\sss(m)}(t)$ and the number of vertices with degree $k$ by $N^{\sss(m)}_k(t)$. Under the hypotheses of Theorem \ref{theo-general}, as $t\rightarrow\infty$, 
\eqn{
\label{for:pmkgeneral}
	\frac{N^{\sss(m)}_k(t)}{N^{\sss(m)}(t)}\stackrel{\pr}{\displaystyle \longrightarrow}p_k^{\sss(m)}=
		\pr\left(\xi^1_{T_{\alpha^*}}+\cdots+\xi^m_{T_{\alpha^*}}=k\right),
}
where $(\xi^1_t)_{t\geq0},\ldots,(\xi_t^m)_{t\geq0}$ are $m$ independent copies of the birth process $(\xi_t)_{t\geq0}$, $\alpha^*$ is the Malthusian parameter of $\sub{\xi}$, and $T_{\alpha^*}$ is an exponentially distributed random variable with parameter $\alpha^*$.
\end{Theorem}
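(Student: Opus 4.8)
The plan is to transfer the known asymptotics for the counting measures of the uncollapsed $\CTBP$ (Theorem~\ref{theo-general}, which I am free to quote) to the collapsed graph by describing the degree of a uniformly chosen vertex of $\CBP^{\sss(m)}_t$ in terms of the branching population. The key observation is structural: a vertex $n$ of $\CBP^{\sss(m)}_t$ is assembled from the $m$ individuals $(n,1),\dots,(n,m)$ of $\sub\xi$, its out-degree is deterministically $m$ (for $n\ge 2$), and its in-degree is $\Din_n(t)=\sum_{j=1}^m \xi^{(n,j)}_{t-\tau_{(n,j)}}$, so its total degree is $m+\Din_n(t)$ up to the boundary vertex. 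Hence counting vertices of $\CBP^{\sss(m)}_t$ of a given degree is the same as counting, among the first $N^{\sss(m)}(t)=\lfloor (\text{population size})/m\rfloor$ blocks of $m$ consecutive individuals, those blocks whose accumulated offspring counts sum to the prescribed value. I would set this up by writing $N^{\sss(m)}_k(t)$ as a sum over $n$ of indicators $\I\{m+\Din_n(t)=k\}$ and then pushing the limit through using the general convergence result for empirical characteristics of $\CTBP$s.

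First I would recall precisely the statement of Theorem~\ref{theo-general}: for a suitable class of (random) characteristics $\phi$ evaluated along the tree, the ratio of the $\phi$-weighted population to the total population converges in probability to an explicit functional involving the stable age distribution, which for a branching process with Malthusian parameter $\alpha^*$ is the law of $T_{\alpha^*}\sim\mathrm{Exp}(\alpha^*)$, evaluated against $\phi$ along an independent copy of $\sub\xi$. The second step is to choose the right characteristic: I want $\phi$ to read off, for the individual $(n,1)$ that marks the birth of vertex $n$, the joint offspring counts of the whole block $(n,1),\dots,(n,m)$ at the current time. Because the other $m-1$ individuals of the block are born only after $(n,1)$ and their future evolution, given their birth times, is an independent copy of the birth process, in the $t\to\infty$ limit the age of $(n,1)$ is $T_{\alpha^*}$ and the $m-1$ additional delays become negligible on the exponential timescale — more precisely, each $(n,j)$ for $j\ge2$ is a descendant born a bounded (tight) amount of time after $(n,1)$, so $t-\tau_{(n,j)}$ and $t-\tau_{(n,1)}$ have the same distributional limit $T_{\alpha^*}$, and the $m$ offspring processes $\xi^{(n,1)},\dots,\xi^{(n,m)}$ are asymptotically independent copies of $(\xi_t)_{t\ge0}$. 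Feeding the characteristic $\phi=\I\{m+\sum_{j=1}^m\xi^{(\cdot,j)}_{t-\tau_{(\cdot,j)}}=k\}$ into Theorem~\ref{theo-general} then yields, as $t\to\infty$,
\eqn{
\frac{N^{\sss(m)}_k(t)}{N^{\sss(m)}(t)}\stackrel{\pr}{\longrightarrow}\pr\!\left(m+\xi^1_{T_{\alpha^*}}+\cdots+\xi^m_{T_{\alpha^*}}=k\right),
}
which is the claimed formula once one absorbs the deterministic out-degree $m$ into the definition of degree (so that the displayed right-hand side matches \eqref{for:pmkgeneral}).

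The main obstacle is the legitimacy of the last paragraph's hand-waving about the $m-1$ delayed individuals: one has to verify that the characteristic used is admissible for Theorem~\ref{theo-general} (typically a boundedness/integrability and measurability condition on $\phi$), and that the contribution of the stagger between $\tau_{(n,1)}$ and $\tau_{(n,j)}$ genuinely washes out — equivalently, that the random variables $\tau_{(n,j)}-\tau_{(n,1)}$ are tight uniformly in $n$ and that conditioning on the block's birth times leaves the $m$ offspring processes i.i.d.\ copies of $\xi$. For this I would argue that within the branching tree the block $(n,1),\dots,(n,m)$ consists of $m$ consecutive individuals in birth order, so the relevant birth-time increments are increments of the point process $(\tau_\ell)_\ell$, which by the exponential growth result (Theorem~\ref{the-BPmain}/\cite{Nerman}) become asymptotically stationary with finite mean; the independence of the offspring processes attached to distinct individuals is built into the definition of $\sub\xi$. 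A minor additional point is to handle the single exceptional vertex $1$ (out-degree $0$ rather than $m$) and any finite boundary effects, which change $N^{\sss(m)}_k(t)$ by $O(1)$ and hence do not affect the limit since $N^{\sss(m)}(t)\to\infty$ a.s. Finally I would note that convergence in probability, rather than almost surely, is inherited directly from the corresponding mode of convergence in Theorem~\ref{theo-general}.
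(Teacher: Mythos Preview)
You have misread what Theorem~\ref{theo-general} is. It is \emph{not} the Nerman-type random-characteristic result for the uncollapsed $\CTBP$ (that is Theorem~\ref{the-BPmain}); Theorem~\ref{theo-general} is the paper's main result about the collapsed process, and its part~(3), equation~\eqref{for:mainT-3}, \emph{is} the convergence $N^{\sss(m)}_k(t)/N^{\sss(m)}(t)\stackrel{\pr}{\to}p_k^{\sss(m)}$. Since you are allowed to quote Theorem~\ref{theo-general}, the entire proof of Theorem~\ref{the-limitCBP} reduces to checking that the formula~\eqref{for-general} for $p_k^{\sss(m)}$ coincides with the expression in~\eqref{for:pmkgeneral}. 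That is immediate: for independent copies $\xi^1,\dots,\xi^m$ of $\xi$ and $u\ge 0$, $\pr(\xi^1_u+\cdots+\xi^m_u=k)=\sum_{k_1+\cdots+k_m=k}\prod_j P_{k_j}[\xi](u)=P[\xi](u)^{*m}_k$, and integrating against the density of $T_{\alpha^*}$ gives $\E[P[\xi](T_{\alpha^*})^{*m}_k]=\pr(\xi^1_{T_{\alpha^*}}+\cdots+\xi^m_{T_{\alpha^*}}=k)$. (Note also that ``degree $k$'' here is in-degree; you should not be adding the deterministic out-degree $m$.)

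What your sketch actually attempts is a proof of Theorem~\ref{theo-general} itself from Theorem~\ref{the-BPmain}, and there the gap is genuine. The functional you write, $\phi=\I\{\sum_{j=1}^m\xi^{(\cdot,j)}_{t-\tau_{(\cdot,j)}}=k\}$, is \emph{not} a random characteristic in the sense of Definition~\ref{def-charact}: it depends on the birth processes of $m$ distinct individuals with $m$ distinct birth times, not on a single individual's subtree. The paper flags this explicitly (``the size of CBP and the number of vertices with degree $k$ are not the evaluation of a CTBP with a random characteristic'') and gets around it by three ingredients your proposal does not supply: (i) attaching \emph{artificial} i.i.d.\ copies $(\xi^{x,1},\dots,\xi^{x,m})$ to each individual so that $\Phi^{\sss(m)}_k$ in~\eqref{for-PHI} is a legitimate characteristic; (ii) a Lipschitz assumption on $t\mapsto\pr(\xi_t=k)$ (Condition~\ref{cond-bounded}) to bound the error from replacing $\tau_{(n,j)}$ by $\tau_{(n,1)}$ uniformly (Proposition~\ref{prop-apprfixt}); and (iii) a conditional second-moment argument with respect to a bulk filtration (Proposition~\ref{prop-condmoments}) to pass from the first-moment approximation to convergence in probability. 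Your appeal to ``tightness of $\tau_{(n,j)}-\tau_{(n,1)}$'' and ``asymptotic stationarity of increments'' is not enough to replace these steps: without the Lipschitz control one cannot turn a tight time-shift into a negligible error uniformly over all $n$ up to $N^{\sss(m)}(t)$, and without the second-moment bound one only gets a first-moment limit, not convergence in probability of the ratio.
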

The hypotheses of Theorem \ref{theo-general} are technical, and they are deferred to later. Theorem \ref{the-limitCBP} is part of Theorem \ref{theo-general}, that is more general and requires notation from CTBPs theory that we introduce in Section \ref{sec-bptheory}. 

\subsection{Embedding PAMs}
In discrete time, PAMs are defined as a sequence of random graphs $(\G_n)_{n\in\N}$, where at every step a new vertex is introduced in the graph. In general, the attachment rule is given in terms of a function of the degree $f$ that we call the  PA {\em function} or {\em weight}. Conditionally on the graph $\G_{(n,j)}$ where the $j$-th edge of the $n$-th vertex has been added, 
\eqn{
\label{for-PAtransition}
	\pr\left(n\stackrel{j+1}{\rightarrow}i~|~\G_{(n,j)}\right) = \frac{f(D_i(n,j))}{\sum_{h=1}^nf(D_h(n,j))},
}
where $D_i(n,j)$ denotes the degree of the vertex $i$ in $\G_{(n,j)}$. When $f$ is affine, it is possible to define the model with out-degree $m\geq 2$ from the tree case where the out-degree is 1 (we refer to \cite[Chapter 8, Section 8.2]{vdH1} for the precise definition). In particular, the collapsing procedure we introduced in Definition \ref{def-collBP} mimics the construction of PAMs with affine attachment function.

Several works in the literature (\cite{Athr}, \cite{Athr2}, \cite{RudValko}) use CTBPs to investigate the degree distribution of PA trees. In particular, embedding theorems are proved between discrete and continuous time (see \cite[Theorem 3.3]{Athr}, \cite[Theorem 2.1]{Athr2}). These results are based on the fact that all intervals between two jumps in every copy of the birth process 
$(\xi_t)_{t\geq0}$ are exponentially distributed. This means that, conditionally on the present state of the tree, the probability that a new vertex is attached to the $i$-th vertex already present is just the ratio between the PA function of the degree of vertex $i$ and the total weight of the tree.
Also PAMs with out-degree $m\geq2$ have been investigated, but not through embeddings of CTBPs.

It is possible to construct a CBP that embeds PAMs with affine attachment function. We need to define a suitable birth process for this:
\begin{Definition}[Embedding birth process]
\label{def-emb_birthpr}
Consider a sequence of positive numbers $(\lambda_k)_{k\in\N}$. Let $(E_k)_{k\in\N}$ be a sequence of independent and exponentially distributed random variables, with $E_k \sim E(\lambda_k)$, and $E_{-1}=0$. We call $(\xi_t)_{t\geq0}$ the {\em embedding birth process}, where $\xi_t = k$ if $t\in[E_{-1}+\cdots+E_{k-1},E_{-1}+\cdots+E_k)$.
\end{Definition}
This construction in used in \cite{RudValko}, \cite{Athr}, \cite{Athr2}. It allows to embed PA trees in continuous time where the PA function is given by $f(k) = \lambda_k$. Embedding birth processes allow us to describe PAMs with out-degree $m\geq2$ and affine $f$ using CBPs. In fact, an immediate application of \cite[Theorem 3.3]{Athr} and \cite[Theorem 2.1]{Athr2} is enough to prove that the transition probability in CBP from $\CBP_{\tau_{(n,j)}}^{\sss(m)}$ to $\CBP_{\tau_{(n,j+1)}}^{\sss(m)}$ are exactly given by 
\eqref{for-PAtransition}, with the restriction that the first edge of every vertex cannot be a self-loop. In particular, this yields the following result:

\begin{Corollary}[Continuous-time PAM]
\label{cor-PAM}
	Fix $m\geq2$ and $\delta>-m$. Let $(\xi_t)_{k\in\N}$ be an embedding birth process defined by the sequence $(k+1+\delta/m)_{k\in\N}$. Then, the corresponding CBP embeds  the PAM in continuous time with attachment rule $f(k) = k+\delta$, and satisfies Theorem \ref{the-limitCBP} (and Theorem \ref{theo-general}). As a consequence, the limiting degree distribution is given by
\eqn{
\label{for-PAMdeg}
	p_k^{\sss(m)} =  \left(2+\delta/m\right)\frac{\Gamma(2+\delta/m+m+\delta)}{\Gamma(m+\delta)}
						\frac{\Gamma(k+m+\delta)}{\Gamma(k+m+\delta+3+\delta/m)}.
}
\end{Corollary}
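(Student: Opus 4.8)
The plan is to prove the corollary in three steps: (1) identify the CBP built from the embedding birth process with sequence $(k+1+\delta/m)_k$ with the continuous-time embedding of the PAM with out-degree $m$ and $f(k)=k+\delta$; (2) check that this birth process satisfies the hypotheses of Theorem~\ref{theo-general}; and (3) evaluate the probability in \eqref{for:pmkgeneral} for this specific birth process. For step~(1), fix a time $t$ at which all individuals $(i,1),\dots,(i,m)$ that are collapsed into a vertex $i$ are already alive. By Definition~\ref{def-emb_birthpr}, the in-degree of $i$ then grows at rate $\sum_{\ell=1}^m\lambda_{\xi^{(i,\ell)}}=\sum_{\ell=1}^m\bigl(\xi^{(i,\ell)}+1+\delta/m\bigr)=\Din_i(t)+m+\delta$, where $\xi^{(i,\ell)}$ denotes the current number of children of $(i,\ell)$; this rate equals the total degree of $i$ plus $\delta$, i.e.\ $f(\mathrm{deg}_i(t))$. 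Since all inter-jump times of every copy of the birth process are exponential, memorylessness turns these rates into attachment probabilities of the form \eqref{for-PAtransition}; moreover the parent of $(n,1)$ has birth index at most $m(n-1)$ and therefore belongs to some vertex $<n$, so the first edge of each vertex is never a self-loop. Combined with the embedding results \cite[Theorem~3.3]{Athr} and \cite[Theorem~2.1]{Athr2} applied to the underlying PA tree with parameter $\delta/m$, this shows that the CBP embeds the PAM with $f(k)=k+\delta$, as claimed.

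For step~(2), put $c:=1+\delta/m$, which is positive because $\delta>-m$. Then $(\xi_t)_{t\ge0}$ is the pure birth process with linear rates $\lambda_k=k+c$; solving the Kolmogorov forward equations (equivalently, viewing $(\xi_t)$ as a Yule process with unit per-capita birth rate and immigration rate $c$) gives that $\xi_t$ has the negative binomial law $\pr(\xi_t=j)=\tfrac{\Gamma(j+c)}{\Gamma(j+1)\Gamma(c)}\,\e^{-ct}(1-\e^{-t})^j$. Hence the reproduction mean measure is $\mu(\mathrm{d}s)=c\,\e^{s}\,\mathrm{d}s$ and $\hat\mu(\alpha):=\int_0^\infty\e^{-\alpha s}\mu(\mathrm{d}s)=c/(\alpha-1)$ for $\alpha>1$, so the Malthusian equation $\hat\mu(\alpha)=1$ has the unique solution $\alpha^*=1+c=2+\delta/m$. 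One checks immediately that $\alpha^*\in(0,\infty)$, that $\hat\mu$ is finite and analytic on a left-neighbourhood of $\alpha^*$, that the process is non-explosive ($\sum_k 1/\lambda_k=\infty$) and non-lattice; hence the hypotheses of Theorem~\ref{theo-general} hold and Theorem~\ref{the-limitCBP} applies with this value of $\alpha^*$.

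For step~(3), condition on $T_{\alpha^*}=t$: the variables $\xi^1_t,\dots,\xi^m_t$ are i.i.d.\ with the above negative binomial law, so their sum is negative binomial with parameters $mc=m+\delta$ and $\e^{-t}$, i.e.\ $\pr(\xi^1_t+\cdots+\xi^m_t=k\mid T_{\alpha^*}=t)=\tfrac{\Gamma(k+m+\delta)}{\Gamma(k+1)\Gamma(m+\delta)}\e^{-(m+\delta)t}(1-\e^{-t})^k$. Integrating against the density $\alpha^*\e^{-\alpha^* t}$ and substituting $u=\e^{-t}$ gives
\[
p^{\sss(m)}_k=\alpha^*\,\frac{\Gamma(k+m+\delta)}{\Gamma(k+1)\Gamma(m+\delta)}\int_0^1 u^{\alpha^*+m+\delta-1}(1-u)^k\,\mathrm{d}u
=\alpha^*\,\frac{\Gamma(k+m+\delta)}{\Gamma(m+\delta)}\cdot\frac{\Gamma(\alpha^*+m+\delta)}{\Gamma(k+\alpha^*+m+\delta+1)},
\]
where the last step uses the Beta integral $\int_0^1 u^{a-1}(1-u)^{b-1}\,\mathrm{d}u=\Gamma(a)\Gamma(b)/\Gamma(a+b)$. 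Substituting $\alpha^*=2+\delta/m$ and rearranging the Gamma factors yields exactly \eqref{for-PAMdeg}. The only genuinely delicate point is step~(1): matching the CBP transitions to those of the discrete PAM requires care with the vertex that is still under construction (whose collapsed individuals are not yet all alive) and with the self-loop convention for its edges, which is precisely why one appeals to the cited embedding theorems; steps~(2) and~(3) are routine computations with negative binomial distributions and Gamma functions.
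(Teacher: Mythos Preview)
Your approach follows the same three-step plan as the paper (embedding, verification of hypotheses, computation of $p_k^{\sss(m)}$), and your steps~(1) and~(3) are correct and essentially equivalent to the paper's. In step~(3) the paper phrases the convolution step as ``the sum of $m$ embedding birth processes with rates $k+1+\delta/m$ is an embedding birth process with rates $k+m+\delta$'' and then quotes the known formula $p^{\sss(m)}_k=\frac{\alpha^*}{\alpha^*+k+m+\delta}\prod_{i=0}^{k-1}\frac{i+m+\delta}{\alpha^*+i+m+\delta}$, which is precisely your negative-binomial/Beta computation written multiplicatively.

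There is, however, a genuine gap in your step~(2). The hypotheses of Theorem~\ref{theo-general} are \emph{not} ``supercritical, Malthusian, non-explosive, non-lattice, $\hat\mu$ analytic near $\alpha^*$''; the theorem explicitly requires Condition~\ref{cond-bounded}, namely that each map $t\mapsto P_k[\xi](t)=\pr(\xi_t=k)$ be Lipschitz with some constant $L(k)$. You never verify this. The paper handles it via the Kolmogorov forward equations
\[
\frac{d}{dt}P_k[\xi](t)=-\lambda_k P_k[\xi](t)+\lambda_{k-1}P_{k-1}[\xi](t),
\]
from which $|\tfrac{d}{dt}P_k[\xi](t)|\le \lambda_k=k+1+\delta/m$, so $L(k)=\lambda_k$ works. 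Since you already have the explicit negative-binomial density $P_j[\xi](t)=\tfrac{\Gamma(j+c)}{j!\,\Gamma(c)}\e^{-ct}(1-\e^{-t})^j$, you could equally differentiate this to exhibit a Lipschitz bound; but you must do one or the other. Listing non-explosion and non-lattice does not substitute for Condition~\ref{cond-bounded}, which is precisely what drives the error control in Proposition~\ref{prop-apprfixt} and hence the whole second-moment argument behind Theorem~\ref{theo-general}.
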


Corollary \ref{cor-PAM} is the application of Theorem \ref{the-limitCBP} to the case of the CTBPs that embed PAMs in continuous time. Indeed, the CBP observed at times  $(\tau_n)_{n\in\N}$ ( the sequence of birth times of the CTBP) corresponds to the discrete-time PAM. However, since the ratio $N^{\sss(m)}_k(t)/N^{\sss(m)}(t)$ converges in probability, Theorem \ref{the-limitCBP}  does not imply the convergence along the sequence $(\tau_n)_{n\in\N}$. To prove that the convergence holds also in discrete time, a more detailed analysis is necessary, therefore we state it as a separate result:
\begin{Theorem}[Discrete-time PAMs]
\label{th:PAMdiscrete}
Fix $m\geq2$ and $\delta>-m$. Let $(\xi_t)_{k\in\N}$ be an embedding birth process defined by the sequence $(k+1+\delta/m)_{k\in\N}$. Consider the corresponding discrete-time PAM defined as  $\mathrm{PA}_{n,j}(m,\delta) = \CBP^{\sss(m)}_{\tau_{(n,j)}}$, for $n\in\N$ and $j\in[m]$. Then, for every $k\in\N$, the fraction of vertices with degree $k$ in $\mathrm{PA}_{n,j}(m,\delta)$ converges in probability to $p^{\sss(m)}_k$ as in \eqref{for-PAMdeg}.
\end{Theorem}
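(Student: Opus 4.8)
The plan is to transfer the continuous-time convergence of Theorem~\ref{the-limitCBP} to the random embedding times $(\tau_{(n,j)})$, exploiting a decoupling between the graph structure and the individual birth times that holds precisely because the attachment weights $f(k)=k+\delta$ are affine. Write $N^{\sss(m)}_{\geq k}(t):=\sum_{l\geq k}N^{\sss(m)}_l(t)$ and $q_k:=\sum_{l\geq k}p^{\sss(m)}_l=1-\sum_{l<k}p^{\sss(m)}_l$. Summing \eqref{for:pmkgeneral} over the finitely many degrees below $k$ gives $N^{\sss(m)}_{\geq k}(t)/N^{\sss(m)}(t)\to q_k$ in probability as $t\to\infty$; since $N^{\sss(m)}_k=N^{\sss(m)}_{\geq k}-N^{\sss(m)}_{\geq k+1}$, it suffices to prove $N^{\sss(m)}_{\geq k}(\tau_{(n,j)})/n\to q_k$ in probability for each $k$ (recall $N^{\sss(m)}(\tau_{(n,j)})=n$). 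Now $t\mapsto N^{\sss(m)}_{\geq k}(t)$ is non-decreasing and only $m$ individuals are born between the consecutive vertex-birth times $T^{(n)}:=\tau_{(n,1)}$ and $T^{(n+1)}$, so, setting $X_n:=N^{\sss(m)}_{\geq k}(T^{(n)})$ — a statistic of the discrete graph $\mathrm{PA}_{n,1}=\CBP^{\sss(m)}_{T^{(n)}}$ — we get $X_n\leq N^{\sss(m)}_{\geq k}(\tau_{(n,j)})\leq X_n+m+1$ for every $j\in[m]$. Hence it is enough to show $X_n/n\to q_k$ in probability.

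\emph{Two structural inputs.} The key point is that $(X_n)_{n}$ is \emph{independent} of the process $(N^{\sss(m)}(t))_{t\geq0}$. Indeed, when the branching tree has $N$ individuals it has $N-1$ edges, so its total birth rate is $\sum_x\lambda_{d_x}=\sum_x(d_x+1+\delta/m)=(N-1)+N(1+\delta/m)$, which depends only on $N$ and not on the shape of the tree (here $d_x$ is the current number of children of $x$ and we use $\lambda_k=k+1+\delta/m$). Consequently the CBP can be generated by first running the discrete Markov chain $n\mapsto\mathrm{PA}_{n,1}$, which determines all the $X_n$, and then, \emph{independently}, attaching exponential holding times $E_N\sim E\big((N-1)+N(1+\delta/m)\big)$ to the successive individual births, so that $N^{\sss(m)}(t)=\lceil Z(t)/m\rceil$, with $Z(t)$ the number of individuals at time $t$, is a function of the $E_N$'s alone. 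The second input is the Malthusian growth of the population (Theorem~\ref{the-BPmain}): $N^{\sss(m)}(t)\e^{-\alpha^* t}\to\Xi$ almost surely for some random variable $\Xi\in(0,\infty)$. Finally, writing $N^{\sss(m)}_{\geq k}(t)=X_{N^{\sss(m)}(t)}+\rho(t)$ with $0\leq\rho(t)\leq m-1$, and using $N^{\sss(m)}_{\geq k}(t)/N^{\sss(m)}(t)\to q_k$ together with $N^{\sss(m)}(t)\to\infty$ a.s., we obtain $Y_t:=X_{N^{\sss(m)}(t)}/N^{\sss(m)}(t)\to q_k$ in probability.

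\emph{The transfer.} Since $0\leq X_{n+1}-X_n\leq m+1$ and $0\leq X_n\leq n$, the sequence $(X_n/n)_n$ is Lipschitz on the logarithmic scale: $|X_{n'}/n'-X_n/n|\leq(m+2)\log(n'/n)$ for $n'\geq n$. Fix $\epsilon>0$ and $c:=1/(2(m+2))$, so that $|X_{n'}/n'-X_n/n|\leq\epsilon/2$ whenever $n\leq n'\leq n(1+c\epsilon)$. Choose $0<a_1<a_2$ with $a_2/a_1\leq 1+c\epsilon$ and $\delta_0:=\pr(\Xi\in(a_1,a_2))>0$ (possible since $\Xi\in(0,\infty)$ a.s.; move $a_1,a_2$ slightly to avoid the at most countably many atoms of $\Xi$), and set $t(n):=(\alpha^*)^{-1}\log(n/a_1)$, so that $N^{\sss(m)}(t(n))/n\to\Xi/a_1$ a.s. On the event $\{|X_n/n-q_k|>\epsilon\}$ one has $|X_{n'}/n'-q_k|>\epsilon/2$ for all $n'\in(n,na_2/a_1)$, hence, using the independence of $(X_n)_n$ and $(N^{\sss(m)}(t))_t$,
\eqn{
	\pr\big(|Y_{t(n)}-q_k|>\epsilon/2\big)\;\geq\;\pr\big(|X_n/n-q_k|>\epsilon\big)\,\pr\big(n<N^{\sss(m)}(t(n))<na_2/a_1\big).
}
As $n\to\infty$ the left-hand side tends to $0$, since $t(n)\to\infty$ and $Y_t\to q_k$ in probability, whereas the last factor tends to $\pr(\Xi\in(a_1,a_2))=\delta_0>0$ by the portmanteau theorem. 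Therefore $\pr(|X_n/n-q_k|>\epsilon)\to0$, i.e.\ $X_n/n\to q_k$ in probability, and so $N^{\sss(m)}_k(\tau_{(n,j)})/n\to q_k-q_{k+1}=p^{\sss(m)}_k$ in probability, uniformly in $j\in[m]$.

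\emph{Expected obstacle.} The crux is this last transfer step: in-probability convergence along deterministic times need not survive along the random times $\tau_{(n,j)}$, which is exactly why the statement is separated from Theorem~\ref{the-limitCBP}. Two ingredients make it work, and neither is available for a general CBP: the affine-weights decoupling, which makes the graph statistics $X_n$ independent of the diffuse random index $N^{\sss(m)}(t(n))$, so that the latter genuinely ``samples'' the bad event; and the logarithmic-Lipschitz bound on $X_n/n$, which spreads that event over a multiplicative window of $n$'s, matching the $\Theta(1)$ spread of the law of $N^{\sss(m)}(t(n))/n$. The remaining points — the finite-sum passage from \eqref{for:pmkgeneral} to the $\{\geq k\}$-version, the precise count of individual births separating $T^{(n)}$, $\tau_{(n,j)}$ and $T^{(n+1)}$, and checking that all vertices have positive degree after the first two births so that $N^{\sss(m)}=\sum_{l\geq1}N^{\sss(m)}_l$ — are routine.
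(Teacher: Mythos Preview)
Your proof is correct and takes a genuinely different route from the paper's. The paper stays within its general CBP framework: it approximates $\tau_n$ by the $\mathcal{F}_{t_n}$-measurable time $\tau'_n=\sigma_n(t_n)$ with $t_n=(\log n)^{1/2}$, invokes Remark~\ref{remark:bulk:function} to run the conditional second-moment argument (Proposition~\ref{prop-condmoments}) at the random bulk-measurable time $\tau'_n$, and then uses $|\tau_n-\tau'_n|\to0$ a.s.\ to transfer to $\tau_n$. You instead exploit a feature specific to affine weights that the paper never isolates: the total birth rate $(N-1)+N(1+\delta/m)$ depends only on the population size, so the discrete PA skeleton---and hence your tail counts $X_n$---is \emph{independent} of the size process $N^{\sss(m)}(\cdot)$. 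Combined with the log-Lipschitz bound on $n\mapsto X_n/n$, this yields a short sampling argument that is more elementary (no bulk filtration, no re-running of the second-moment machinery) and makes transparent why the discrete-time transfer succeeds here but would fail for a general CBP. The paper's route, by contrast, keeps the argument uniform with the rest of the paper and does not single out this independence. One harmless slip in your closing paragraph: it is not true that all vertices have positive in-degree (the last-born vertex has in-degree $0$), but this plays no role, since the identity you actually need is $N^{\sss(m)}=\sum_{l\geq 0}N^{\sss(m)}_l$, and $q_0=1$ is trivial.
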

While CTBP arguments have been used a lot in the context of PA trees (for which $m=1$),  Theorem \ref{th:PAMdiscrete} provides the first example where it is applied beyond the tree setting.  Thus, our results offer to opportunity to use the powerful CTBP tools in order to study PAMs.

To show the universality of our collapsing construction, we apply Theorem \ref{the-limitCBP} to another classical random graph model. A random recursive tree (RRT) is a sequence of PA trees where the attachment function $f$ is equal to one. At every step, a vertex is added to the tree and attached uniformly to one existing vertex (see \cite{vdH2001} for an introduction). In this case we obtain the following result:
\begin{Corollary}[Random recursive graph]
\label{cor-RRG}
Fix $m\geq2$. Let $(\xi_t)_{k\in\N}$ be an embedding birth process defined by the sequence $\lambda_k = 1$ for every $k\in\N$. Then, the corresponding CBP defines a sequence of random graphs which transition probabilities are given by
\eqn{
\label{for-transprob-1}
	\pr\left(n\stackrel{j+1}{\rightarrow}i ~|~ \CBP^{\sss(m)}_{\tau_{(n,j)}}\right) =\left\{
	\begin{array}{lcr}
		\frac{1}{(n-1)+j/m} & 
			& \mbox{if}~i\neq n,\\
						& & \\
	\frac{j/m}{(n-1)+j/m} & & \mbox{if}~i= n.\\
		\end{array}\right.
}
We call the sequence of random graphs defined by \eqref{for-transprob-1} {\em random recursive graph}. As a consequence, the limiting degree distribution is given by
\eqn{
\label{for-RRGdistr}
	p^{\sss(m)}_k = \frac{1}{m+1}\left(1+\frac{1}{m}\right)^{-k}.
}
Consequently, the same result also holds in discrete time.
\end{Corollary}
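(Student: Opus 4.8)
The plan is to establish the three assertions of the corollary in turn, writing $(\xi_t)_{t\ge0}$ throughout for the embedding birth process of Definition \ref{def-emb_birthpr} with $\lambda_k=1$ for all $k$. For the transition probabilities \eqref{for-transprob-1}, I would run the exponential-race argument underlying the embedding theorems \cite[Theorem 3.3]{Athr} and \cite[Theorem 2.1]{Athr2}: since every alive individual reproduces at rate exactly $1$, independently of its current number of children, memorylessness of the exponential inter-birth times shows that, conditionally on the tree at any birth time $\tau_{(n,j)}$, the parent of the next individual born is uniform over the set of individuals alive at time $\tau_{(n,j)}$. That set is $\{(1,1),\dots,(n-1,m)\}\cup\{(n,1),\dots,(n,j)\}$, of cardinality $m(n-1)+j$, with exactly $m$ individuals in each block $i\le n-1$ and $j$ individuals in block $n$. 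By Definition \ref{def-collBP}, a fresh edge from vertex $n$ to vertex $i$ is created precisely when the new individual attaches inside block $i$, so dividing the corresponding counts by $m(n-1)+j$ yields \eqref{for-transprob-1}; the self-loop case $i=n$ is the event that the new individual attaches to one of the $j$ individuals already in block $n$. The same computation at $\tau_{(n-1,m)}=\tau_{(n,0)}$ handles the first edge of vertex $n$, for which a self-loop is impossible since block $n$ is then empty.

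For the limiting degree distribution I would apply Theorem \ref{the-limitCBP}. With $\lambda_k\equiv1$, the birth process $(\xi_t)_{t\ge0}$ is a rate-$1$ Poisson process, so its mean reproduction measure $\mathrm{d}\E[\xi_t]$ is Lebesgue measure on $[0,\infty)$ and $\int_0^\infty\e^{-\alpha t}\,\mathrm{d}t=1/\alpha$; hence the Malthusian parameter is $\alpha^*=1$, and the hypotheses of Theorem \ref{theo-general} are routine to verify for this process. Conditionally on $T_{\alpha^*}=t$, the copies $\xi^1_t,\dots,\xi^m_t$ in \eqref{for:pmkgeneral} are independent and $\mathrm{Poisson}(t)$-distributed, so their sum is $\mathrm{Poisson}(mt)$; integrating against the $\mathrm{Exp}(1)$ density of $T_{\alpha^*}$ gives
\[
	p_k^{\sss(m)}=\int_0^\infty\e^{-t}\,\frac{(mt)^k\e^{-mt}}{k!}\,\mathrm{d}t=\frac{m^k}{k!}\int_0^\infty t^k\e^{-(m+1)t}\,\mathrm{d}t=\frac{m^k}{(m+1)^{k+1}}=\frac{1}{m+1}\Big(1+\tfrac1m\Big)^{-k},
\]
which is \eqref{for-RRGdistr}.

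For the discrete-time claim, rather than reproving anything I would check that the argument behind Theorem \ref{th:PAMdiscrete} goes through unchanged: that argument upgrades the continuous-time convergence in \eqref{for:pmkgeneral} to convergence of the degree fractions along the birth-time subsequence $(\tau_n)_{n\in\N}$ by showing that the increments of $N^{\sss(m)}(t)$ and $N^{\sss(m)}_k(t)$ between consecutive births are asymptotically negligible relative to $N^{\sss(m)}(t)$, and it invokes only the general CTBP hypotheses, not the affine form $f(k)=k+\delta$. I expect this last step to be the only genuinely non-routine point; concretely it reduces to verifying that the proof of Theorem \ref{th:PAMdiscrete} nowhere exploits affinity of the weight, the remaining ingredients being the Poisson computation above and the counting of alive individuals.
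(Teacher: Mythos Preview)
Your argument for the transition probabilities and for the limiting degree distribution is the paper's own: both identify $(\xi_t)_{t\ge0}$ as a rate-$1$ Poisson process, compute $\alpha^*=1$, use that $\xi^1_t+\cdots+\xi^m_t$ is $\mathrm{Poisson}(mt)$, and integrate against the $\mathrm{Exp}(1)$ density of $T_{\alpha^*}$ to obtain \eqref{for-RRGdistr}. The paper checks the hypotheses of Theorem \ref{theo-general} by verifying Condition \ref{cond-bounded} via the recursion \eqref{for-derivative-PAM}, which with $\lambda_k\equiv1$ gives $|P_k[\xi]'(t)|\le 1$; you should say this rather than ``routine''.

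For the discrete-time claim your conclusion is right --- the proof of Theorem \ref{th:PAMdiscrete} in Section \ref{sec:discrete} uses only the general CTBP machinery (the bulk filtration, Proposition \ref{prop-condmoments}, and Remark \ref{remark:bulk:function}) and nowhere the affine form of the weight --- but your description of \emph{how} that proof works is inaccurate. It is not an ``increments between consecutive births are negligible'' argument. Instead one builds a $t_n$-bulk-measurable approximation $\tau'_n=\sigma_n(t_n)$ of $\tau_n$ with $t_n=(\log n)^{1/2}$, applies the conditional second-moment method of Proposition \ref{prop-condmoments} at the bulk-measurable times $\tau'_n$ via Remark \ref{remark:bulk:function}, and then transfers to $\tau_n$ using $|\tau_n-\tau'_n|\stackrel{a.s.}{\to}0$. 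If you actually tried to locate the step you described, you would not find it; so while your plan to ``reuse Theorem \ref{th:PAMdiscrete}'' is correct, the verification you would need to carry out is different from what you anticipate.
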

In this case the CBP can be seen as the generalization of the RRT to the case where the out-degree is $m\geq2$. In particular, when $m=1$ the distribution in \eqref{for-RRGdistr} reduces to $p^{\sss(1)}_k = 2^{-(k+1)}$, which is the known limiting degree distribution for the RRT (see \cite{Janson}).

An extension of the PAM has been proposed by us in \cite{GarvdHW}, where we introduce fitness and aging in preferential attachment trees. The methodology used in the present work is applicable to the case with aging only. The fitness case is not tractable, and we explain the reason in Section \ref{sec-discussion}. A preferential attachment tree with aging is given in terms of a  CTBP, where we introduce the effect of aging, i.e., the probability of generating a child decreases with age. For a precise definition of such processes, we refer to \cite[Section 2.2]{GarvdHW}.

\begin{Definition}[Aging birth process]
\label{def-agingprocess}
Consider a sequence of positive numbers $(\lambda_k)_{k\in\N}$, and the corresponding embedding birth process as in Definition 
\ref{def-emb_birthpr}. Consider a function $g:\R^+\rightarrow\R^+$ called {\em aging function}, such that $\int_0^\infty g(t)dt<\infty$.  Defining $G(t) = \int_0^tg(s)ds$, we call $(\xi_{G(t)})_{t\geq0}$ an {\em aging birth process}. 
\end{Definition}

The assumption on the integrability of $g$ is not necessary, but as shown in  \cite{GarvdHW} this is the non-trivial  case of aging effect.
In  \cite{GarvdHW} we prove that a CTBP defined by an aging birth process has a limiting degree distribution $(p^{\sss(1)}_k)_{k\in\N}$ with exponential tail, under the condition $\lim_{t\rightarrow\infty}\E[\xi_{G(t)}]>1$.  The result of the present paper can also be applied to the aging birth processes, leading to the following result:
\begin{Corollary}[Aging PAMs]
\label{cor-aging}
Fix $m\geq 2$, $\delta>-m$, and define the sequence $(k+1+\delta/m)_{k\in\N}$. Denote the corresponding embedding birth process by $(\xi_t)_{t\geq0}$. Let $g$ be an aging function as in Definition \ref{def-agingprocess}, such that $g(t)\leq \bar{g}$ for some constant $\bar{g}>0$ and for every $t\geq0$. Assume that $\lim_{t\rightarrow\infty}\E[\xi_{G(t)}]>1$. Then, the CBP obtained by the CTBP defined by the aging process satisfies Theorem \ref{the-limitCBP} (and Theorem \ref{theo-general}). As a consequence, the limiting degree distribution $(p^{\sss(m)}_k)_{k\in\N}$ satisfies 
\eqn{
\label{for-distrAging-CBP}
	p^{\sss(m)}_k = \frac{\Gamma(k+m+\delta)}{\Gamma(k+1)}\e^{-Ck}(1+o(1)),
}
where $C = |\log(1-\e^{-\int_0^\infty g(t)dt})|$.
\end{Corollary}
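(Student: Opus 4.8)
The plan is to obtain this as an application of Theorem~\ref{the-limitCBP}, which splits the work into two steps: first, checking that the $\CTBP$ driven by the aging birth process of Definition~\ref{def-agingprocess} satisfies the hypotheses of Theorem~\ref{theo-general}; second, evaluating the limit $p^{\sss(m)}_k$ that Theorem~\ref{the-limitCBP} then supplies and reading off its $k\to\infty$ asymptotics. Write $c=1+\delta/m>0$ and $G_\infty=\int_0^\infty g(t)\,dt$. The computational backbone of both steps is that the un-aged embedding process $(\xi_s)_{s\ge0}$ with weight sequence $(k+1+\delta/m)_{k\in\N}$ is a linear birth process with immigration: solving the forward equations (equivalently, the generating function $\E[z^{\xi_s}]$ solves the first-order PDE $\partial_s\E[z^{\xi_s}]=(z-1)\big(z\,\partial_z\E[z^{\xi_s}]+c\,\E[z^{\xi_s}]\big)$ with $\E[z^{\xi_0}]=1$), one finds $\E[z^{\xi_s}]=\big(\e^{-s}/(1-(1-\e^{-s})z)\big)^{c}$, so $\xi_s\sim\mathrm{NegBin}(c,\e^{-s})$ and $\E[\xi_s]=c(\e^{s}-1)$.

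For the first step, I would proceed as follows. The mean reproduction measure of the aging process is $\mu(dt)=\E[d\xi_{G(t)}]=c\,g(t)\,\e^{G(t)}\,dt$, which has density bounded by $c\,\bar g\,\e^{G_\infty}$ and total mass $\mu(\R^+)=c(\e^{G_\infty}-1)=\lim_{t\to\infty}\E[\xi_{G(t)}]>1$ by assumption. Hence $\hat\mu(\alpha)=\int_0^\infty\e^{-\alpha t}\mu(dt)$ is finite for all $\alpha\ge0$, continuous and strictly decreasing, with $\hat\mu(0)>1$ and $\hat\mu(\alpha)\downarrow0$, so there is a unique Malthusian parameter $\alpha^*\in(0,\infty)$ with $\hat\mu(\alpha^*)=1$. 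The remaining (integrability and moment) conditions of Theorem~\ref{theo-general} involve only the law of a single reproduction process, which is the same as in the $m=1$ aging model of \cite{GarvdHW}; I expect these to carry over verbatim, using $g\le\bar g$ to control the instantaneous rate and the light inter-jump gaps of the embedding process, together with the non-aging check already underlying Corollary~\ref{cor-PAM} after the substitution $s\mapsto G(s)$.

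For the second step, Theorem~\ref{the-limitCBP} gives $p^{\sss(m)}_k=\pr\big(\xi^1_{G(T_{\alpha^*})}+\cdots+\xi^m_{G(T_{\alpha^*})}=k\big)$, with $\xi^1,\ldots,\xi^m$ independent copies of the embedding process above and $T_{\alpha^*}$ exponential with parameter $\alpha^*$. Conditioning on $T_{\alpha^*}$ and using that the sum of $m$ independent $\mathrm{NegBin}(c,p)$ variables is $\mathrm{NegBin}(mc,p)$ with $mc=m+\delta$, I would arrive at
\[
p^{\sss(m)}_k=\frac{\Gamma(k+m+\delta)}{\Gamma(m+\delta)\,\Gamma(k+1)}\;\E\!\Big[\e^{-(m+\delta)G(T_{\alpha^*})}\big(1-\e^{-G(T_{\alpha^*})}\big)^{k}\Big].
\]
Since $0\le G(t)\uparrow G_\infty$, one has $1-\e^{-G(t)}\le1-\e^{-G_\infty}=\e^{-C}$, so the $k$-th power is at most $\e^{-Ck}$, and for large $k$ the expectation concentrates on $\{T_{\alpha^*}\text{ large}\}$, where $G(T_{\alpha^*})\to G_\infty$. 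A Laplace/Watson-type estimate of that expectation — controlling $G_\infty-G(t)=\int_t^\infty g$ through $g\le\bar g$ and $\int_0^\infty g<\infty$, exactly as in the $m=1$ analysis of \cite{GarvdHW} — shows that it is $\e^{-Ck}$ up to the polynomial factor already present; together with Stirling's $\Gamma(k+m+\delta)/\Gamma(k+1)\sim k^{m+\delta-1}$ this yields \eqref{for-distrAging-CBP}, with $C=-\log\sup_{t\ge0}(1-\e^{-G(t)})=\big|\log\big(1-\e^{-\int_0^\infty g(t)\,dt}\big)\big|$.

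The hardest part will be the first step: checking that composing the reproduction process with the bounded deterministic time change $G$ preserves all the technical hypotheses behind the general convergence Theorem~\ref{theo-general}, beyond the easy persistence of a Malthusian parameter. A secondary subtlety is the boundary Laplace analysis as $t\to\infty$ in the second step — the exponential rate $C$ is robust, but pinning down the prefactor needs the control on how $G$ approaches $G_\infty$ afforded by the hypotheses on $g$, and I would import this from \cite{GarvdHW}. Everything else — recognising a Negative-Binomial mixture and a Stirling estimate — is routine.
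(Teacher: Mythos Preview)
Your proposal is essentially correct and tracks the paper's argument, but you misread where the work lies. The sole extra hypothesis in Theorem~\ref{theo-general} beyond supercriticality/Malthusian is Condition~\ref{cond-bounded} (Lipschitz continuity of $t\mapsto P_k[\xi_{G(\cdot)}](t)$), not ``integrability and moment conditions''. The paper dispatches this in one line: by the chain rule applied to \eqref{for-derivative-PAM},
\[
\frac{d}{dt}P_k[\xi](G(t)) = \Big(-(k+1+\delta/m)P_k[\xi](G(t))+(k+\delta/m)P_{k-1}[\xi](G(t))\Big)g(t),
\]
so $|P_k[\xi](G(t))'|\le 2(k+1+\delta/m)\bar g$ and Condition~\ref{cond-bounded} holds. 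Your ``substitution $s\mapsto G(s)$ together with $g\le\bar g$'' is exactly this, but you flag it as the hardest part when it is the easiest.

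For the second step you and the paper do the same thing in different language. Your observation that $\xi_s\sim\mathrm{NegBin}(1+\delta/m,\e^{-s})$ and hence $\xi^1_s+\cdots+\xi^m_s\sim\mathrm{NegBin}(m+\delta,\e^{-s})$ is precisely the paper's statement that the sum of $m$ embedding processes with weights $(k+1+\delta/m)_{k\in\N}$ is a single embedding process with weights $(k+m+\delta)_{k\in\N}$; time-changing by $G$ then yields a single aging process with that shifted weight sequence. The paper then imports the tail asymptotics \eqref{for-distrAging-CBP} directly from \cite[Proposition~5.2]{GarvdHW}, whereas you sketch a Laplace/Watson argument for the expectation $\E[\e^{-(m+\delta)G(T_{\alpha^*})}(1-\e^{-G(T_{\alpha^*})})^k]$. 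Be aware that the crude bound $(1-\e^{-G(t)})\le \e^{-C}$ only gives $\limsup k^{-1}\log\E[\cdots]\le -C$; obtaining the matching lower bound at the level of $(1+o(1))$ genuinely needs quantitative control on how $G(t)\uparrow G_\infty$, which is exactly what \cite{GarvdHW} supplies and what you correctly plan to cite.
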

In particular, it is possible to show that the transition probabilities of the discrete-time version $(\CBP^{\sss(m)}_{\tau_{(n,j)}})_{n\in\N,j\in[m]}$ of a CBP defined by an aging process satisfies
\eqn{
\label{for-agePAM}
	\pr\left(n\stackrel{j+1}{\rightarrow}i~|~\CBP^{\sss(m)}_{\tau_{(n,j)}},\tau_{(n,j+1)}\right) \approx \frac{(D_i(\tau_{(n,j)})+\delta)g(\tau_{(n,j+1)}-\tau_{(i,1)})}{\sum_{h=1}^n(D_h(\tau_{(n,j)})+\delta)g(\tau_{(n,j+1)}-\tau_{(h,1)})},
}
where $D_i(t)$ denotes the total degree of vertex $i$ in $\CBP_t^{\sss(m)}$ and the approximation is due to the fact that we consider $\tau_{(i,1)}$ as the birth time of all the $m$ individuals collapsed to generate vertex $i$. The expression in \eqref{for-agePAM} for the attachment rule in the presence of aging resembles the ones given in other works about aging in PAMs (\cite{WuFu}, \cite{WaMiYu}, \cite{WanGen}).

\subsection{Discussion and open problems}
\label{sec-discussion}

\medskip
\paragraph{\bf Neighborhoods in CBP.} $\CBP$ with fixed out-degree $m\geq 2$ is a continuous-time random graph model of which the size of the graph grows exponentially in time (see \eqref{for:mainT-1}), and we are able to describe its limiting degree distribution. In particular, we can see a branching tree as a special case of $\CBP$ with $m=1$. This is an attempt to translate properties from $\CTBP$ to multigraphs. As a consequence, we might ask what other topological properties a $\CBP$ inherits from the underlying $\CTBP$. As an example, PAMs are known to be {\em locally tree-like} graphs (see \cite{Borgs14}), prompting the question whether this is true because PAMs can be defined as CBPs. 

\begin{figure}[!h]
\centering
 \begin{subfigure}[b]{0.4\textwidth}
				\centering
        \includegraphics[width= 0.6\textwidth]{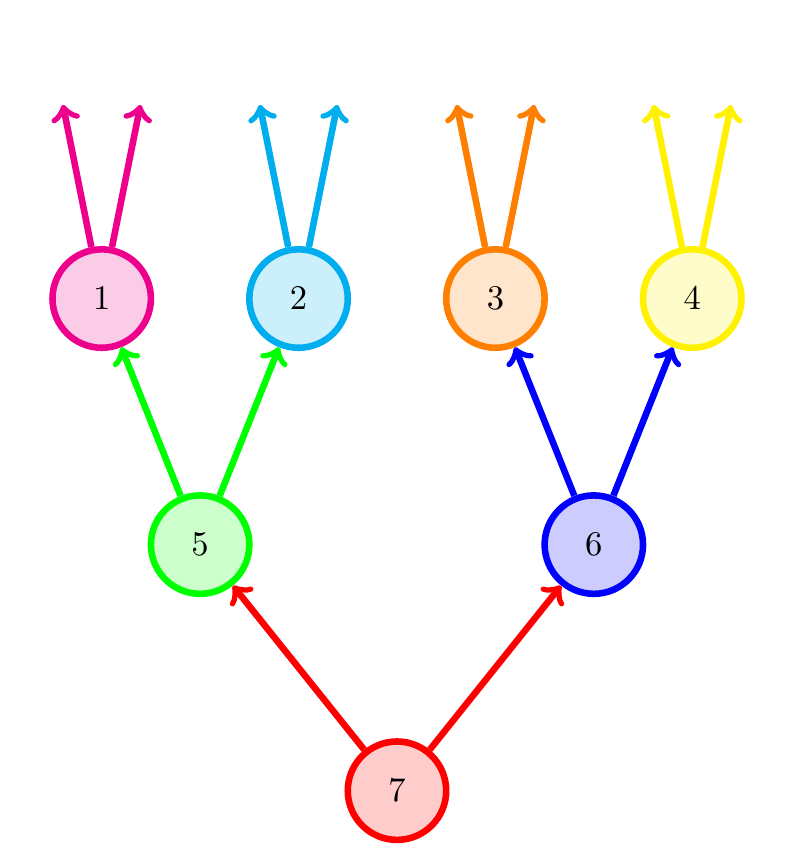}
				\caption{$\CBP$}
    \end{subfigure}
		\begin{subfigure}[b]{0.4\textwidth}
				\centering
        \includegraphics[width= 0.7\textwidth]{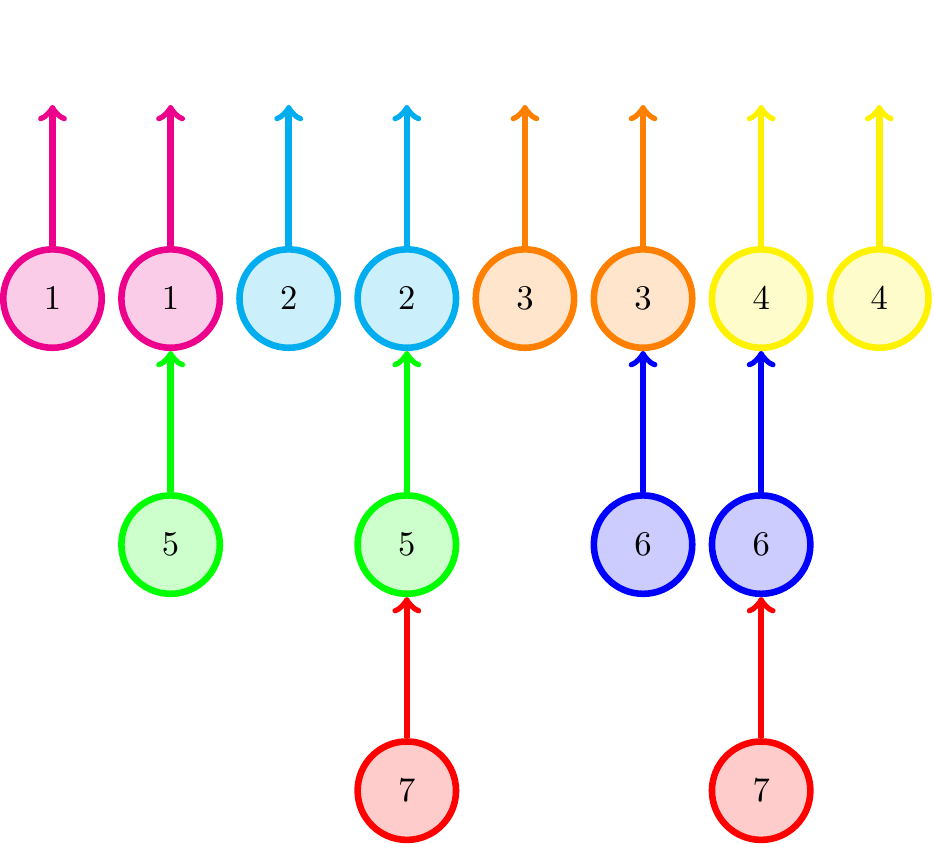}
        \caption{$\CTBP$}
    \end{subfigure}
		\caption{An example of minimum degree tree in $\CBP$ with $m=2$, and a realization of the corresponding structure in $\CTBP$ that generates it. Notice that different realizations in $\CTBP$ can generate the same graph in $\CBP$.}
		\label{fig:tree-cbp}
\end{figure}

For example, a tree in $\CBP$ with depth $k$ and vertices of minimum degree $m$ is generated by {\em chains} of individuals in the corresponding $\CTBP$. In \cite{CarGarHof}, we prove that the number of such trees in PAM diverges as the size of the graph increases. In terms of the $\CTBP$, it is necessary to look for structures similar to the one in Figure \ref{fig:tree-cbp}.
It would be interesting to investigate the topological properties of the neighborhoods of vertices in $\CBP$, to see if and how they depend on the corresponding $\CTBP$. It would also be interesting to compare the local structure of $\CTBP$ with the result given in \cite{Borgs14} in terms of local weak convergence.
\medskip

\paragraph{\bf Random out-degree.} An interesting extension of the present work is the case of {\em random out-degree} graphs. Instead, our $\CBP$ have {\em fixed out-degree} $m\geq2$. However, the collapsing procedure is well defined for any sequence of out-degrees $(m_n)_{n\in\N}$, both deterministic or random. Results are known for PAM with random out-degree (see \cite{Dei}), suggesting that $\CBP$ with random out-degree is the continuous-time version of PAM with random out-degrees.
\medskip

\paragraph{\bf More general PA functions and fitnesses.} When collapsing, the degree $\Din_n(t)$ of a vertex $n$ in $\CBP$ is distributed as the sum of $m$ independent birth process $(\xi_t)_{t\geq0}$. When we consider an affine PA function of the type $f(k) = ak+b$, with $a\geq0$ and $b>0$, the sum of the $m$ weights corresponding to the $m$ individuals becomes $a(D_1+\ldots+D_m)+ mb$, i.e., the collapsed individuals become {\em indistinguishable}. This is still true when we consider an affine PA function $f$ and aging $g$, because of the linearity of $f$ and the fact that the error given by the difference of birth times is negligible.

 This is no longer true when the PA function is not affine and/or in the presence of fitness. In fitness models, every individual $x$ is assigned an independent realization $Y_x$ from a fitness distribution, and it produces children according to the sequence of PA weights $(Y_xf(k))_{k\in\N}$ (see \cite{Borgs}, \cite{Bianconi},\cite{GarvdHW}). In this case, individuals with different fitness values are not indistinguishable anymore. Assigning the same fitness value to $m$ different individuals would define a process that is not a CTBP in the sense of Definition \ref{def-BP}. To overcome this problem, in the case of discrete-valued fitness, we might collapse individuals according to their fitness values and not according to their birth order. This might be applied also to CTBPs with fitness and aging as introduced in \cite{GarvdHW}. This is a topic for future work.

\section{Overview of the proof of Theorem \ref{theo-general}}
\subsection{General branching process theory}
\label{sec-bptheory}
Here we recall the main results on branching processes that we will use in this paper. Continuous-time branching processes (CTBPs) are models where a population is composed by individuals that produce children according to i.i.d. copies of a birth process $(\xi_t)_{t\geq0}$. The formal definition of a CTBP is the following:

\begin{Definition}[Branching process]
\label{def-BP}
We define the set of individuals in the population as
\eqn{
	\mathcal{N} = \bigcup_{n\in\N}\N^n.
}
Consider a point process $\xi$. Then, the continuous-time branching process is described by
\eqn{
\label{space}
	(\Omega,\mathcal{A},\pr) = \prod_{x\in\mathcal{N}}(\Omega_x,\mathcal{A}_x,\pr_x),
}
where $(\Omega_x,\mathcal{A}_x,\pr_x)$ are probability spaces and $(\xi^x)_{x\in\mathcal{N}}$ are i.i.d. copies of $\xi$. For $x\in\N^n$ and $k\in\N$ we denote the $k$-th child of $x$ by $xk\in\N^{n+1}$. More generally, for $x\in\N^n$ and $y\in\N^m$, we denote the $y$ descendant of $x$ by $xy$. We call the branching process the triplet $(\Omega,\mathcal{A},\pr)$ and the sequence of point processes $(\xi^x)_{x\in\mathcal{N}}$. We denote the branching process by $\sub{\xi}$.
\end{Definition}

The behavior of CTBPs is determined by the properties of the birth process.
Consider a jump process $\xi$ on $\R^+$, i.e., an integer-valued random measure on $\R^+$. Denote the time of the $k$-th jump of $(\xi_t)_{t\geq0}$ by $T_k$. Then we say that $\xi$ is {\em supercritical} when there exists $\alpha^*>0$ such that
\eqn{
\label{for:supercr}
	\mathcal{L}(\E\xi(d\cdot))(\alpha^*) = \int_0^\infty \e^{-\alpha^* t}\E\xi(dt) =1.
}

Here $\E\xi(dx)$ denotes the density of the {\itshape averaged} measure $\E[\xi([0,t])]$. A second fundamental property for the analysis of branching processes is the Malthusian property.  Consider a point process $\xi$. Take the parameter $\alpha^*$ that satisfies\eqref{for:supercr}. Then the process $\xi$ is {\em Malthusian} with Malthusian parameter $\alpha^*$ if 
\eqn{
\label{for-mu}
	\mu := \left.-\frac{d}{d\alpha}\left(\mathcal{L}(\E\xi(d\cdot))\right)(\alpha)\right|_{\alpha^*} = \int_0^\infty t\e^{-\alpha^* t}\E\xi(dt) <\infty. 
}

An important class of functions of branching processes are random characteristics: 
\begin{Definition}[Random characteristic]
\label{def-charact}
	A {\em random characteristic} is a real-valued process $\Phi\colon \Omega\times\R\rightarrow\R$ such that $\Phi(\omega,s)=0$ for any $s<0$, and $\Phi(\omega,s) = \Phi(s)$ is a deterministic bounded function for every $s\geq 0$ that only depends on $\omega$ through the birth time of the individual, as well as the birth process of its children.
\end{Definition}

Let  $\La(f(\cdot))(\alpha)$ denote the Laplace transform of a function $f$ evaluated in $\alpha>0$. We are now ready to quote the main result on CTBPs:

\begin{Theorem}
\label{the-BPmain}
Consider a point process $\xi$, and the corresponding branching process $\sub{\xi}$. Let $\xi$ be supercritical and Malthusian with parameter $\alpha^*$, and suppose that there exists $\bar{\alpha}<\alpha^*$ such that
\eqn{
	\int_0^\infty \e^{-\bar{\alpha}t}\E\xi(dt)<\infty.
}
Then, the following properties hold:
\begin{enumerate}
		\item There exists a random variable $\Theta$ such that, for any random characteristic $\Psi$, as $t\rightarrow\infty$,
		\eqn{
		\label{th-expogrowth-f1}
			\e^{-\alpha^* t}\sub{\xi}^\Psi_t\stackrel{\pr-a.s.}{\longrightarrow}\frac{1}{\mu}\La(\E[\Psi(\cdot)])(\alpha^*)\Theta.
		}	
		\item On the event 
					$\{\sub{\xi}^{\I_{\R^+}}_t\rightarrow\infty\}$, $\pr\left(\Theta>0\right)=1$ and  $\E[\Theta]=1$. 
	\end{enumerate}
\end{Theorem}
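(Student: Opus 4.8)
The plan is to recognise Theorem~\ref{the-BPmain} as the Nerman--Jagers convergence theorem for general (Crump--Mode--Jagers) branching processes counted by random characteristics, and to reprove it by combining an intrinsic martingale with a renewal equation for first moments; since this is classical (\cite[Theorem 5.4]{Nerman}, \cite[Theorem A]{RudValko}), one legitimate option is simply to cite it, but here is how a self-contained argument would run. Throughout, write $\sigma_x$ for the birth time of the individual $x\in\mathcal{N}$ and $\Psi^x$ for the copy of the characteristic attached to $x$, so that $\sub{\xi}^\Psi_t=\sum_{x\in\mathcal{N}}\Psi^x(t-\sigma_x)$. The whole argument is driven by the observation that, by the defining identity \eqref{for:supercr} of $\alpha^*$, the measure $\hat\mu(ds):=\e^{-\alpha^* s}\,\E\xi(ds)$ is a probability measure on $(0,\infty)$ whose mean is the constant $\mu$ from \eqref{for-mu}.

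First I would construct the intrinsic martingale. Let $\mathbb{I}_t=\{x=yk:\sigma_y\le t<\sigma_{yk}\}$ be the stopping line of individuals born just after level $t$, let $(\mathcal{A}_t)_{t\ge0}$ be the natural filtration under which $\mathbb{I}_t$ together with the birth times of its members is measurable, and set $W_t:=\sum_{x\in\mathbb{I}_t}\e^{-\alpha^*\sigma_x}$. Using that $\mathbb{I}_t$ is partitioned by the element of $\mathbb{I}_s$ it descends from and that each subtree is a fresh i.i.d.\ copy of $\sub{\xi}$, the identity \eqref{for:supercr} gives $\E[W_t\mid\mathcal{A}_s]=W_s$ for $s\le t$, so $(W_t,\mathcal{A}_t)$ is a nonnegative martingale with $\E[W_t]=1$; hence $W_t\to\Theta$ almost surely for some $\Theta\ge0$, which will be the random variable in the statement. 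To get $\E[\Theta]=1$ one must upgrade this to $L^1$-convergence: the hypothesis $\int_0^\infty\e^{-\bar\alpha t}\,\E\xi(dt)<\infty$ for some $\bar\alpha<\alpha^*$ plays the role of the Kesten--Stigum $x\log x$ condition and yields an $L^p$-bound on $(W_t)$ for some $p>1$ (via a truncation/spine decomposition, or a direct second-moment computation when $p\le2$), hence uniform integrability and $\E[\Theta]=1$. Positivity of $\Theta$ on the explosion event $\{\sub{\xi}^{\I_{\R^+}}_t\to\infty\}$ then follows from a zero--one argument: decomposing $\Theta$ over an early stopping line expresses $\{\Theta=0\}$ through i.i.d.\ copies of itself attached to the subtrees, so $\pr(\Theta=0)$ solves the extinction fixed-point equation and $\{\Theta>0\}$ coincides a.s.\ with $\{\sub{\xi}^{\I_{\R^+}}_t\to\infty\}$.

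Next I would handle a general random characteristic $\Psi$ at the level of expectations. Conditioning on the reproduction of the root gives the first-generation decomposition $\sub{\xi}^\Psi_t=\Psi^\emptyset(t)+\sum_k(\sub{\xi}^\Psi)^{(k)}_{t-\sigma_k}$, where $(\sub{\xi}^\Psi)^{(k)}$ is the characteristic-counted process of the subtree rooted at the $k$-th child and $\sigma_k$ its birth time; taking expectations and using the many-to-one formula yields the renewal equation $\E[\sub{\xi}^\Psi_t]=\E[\Psi(t)]+\int_0^t\E[\sub{\xi}^\Psi_{t-s}]\,\E\xi(ds)$. Multiplying by $\e^{-\alpha^* t}$ turns this into $m^\Psi=h+\hat\mu*m^\Psi$ with $m^\Psi(t)=\e^{-\alpha^* t}\E[\sub{\xi}^\Psi_t]$ and $h(t)=\e^{-\alpha^* t}\E[\Psi(t)]$. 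Since $\hat\mu$ is a probability measure with finite mean $\mu$ and $h$ is directly Riemann integrable (boundedness of $\Psi$ together with the exponential weight provide the required decay), the key renewal theorem gives $m^\Psi(t)\to\frac1\mu\int_0^\infty h(s)\,ds=\frac1\mu\La(\E[\Psi(\cdot)])(\alpha^*)$, which is exactly the constant appearing in \eqref{th-expogrowth-f1}.

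The hard part will be upgrading this convergence of means to the almost-sure statement~(1), and this is where I expect the real work. The route is Nerman's stopping-line comparison: for a large level $u$ write $\e^{-\alpha^* t}\sub{\xi}^\Psi_t=\sum_{x\in\mathbb{I}_u}\e^{-\alpha^*\sigma_x}\bigl(\e^{-\alpha^*(t-\sigma_x)}(\sub{\xi}^\Psi)^{(x)}_{t-\sigma_x}\bigr)$. Each bracket is the normalised characteristic-counted process of an independent copy of $\sub{\xi}$ and, by an inductive use of the already established convergence for ``nice'' characteristics (first for $\Psi=\I_{\R^+}$ and for $\Psi(s)=\e^{-\alpha^* s}$, then for general bounded $\Psi$ by monotone/dominated comparison), tends to $\frac1\mu\La(\E[\Psi(\cdot)])(\alpha^*)\,\Theta_x$ with $\Theta_x$ the intrinsic limit of the $x$-subtree; meanwhile $\sum_{x\in\mathbb{I}_u}\e^{-\alpha^*\sigma_x}\Theta_x\to\Theta$ as $u\to\infty$ by the martingale limit. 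The delicate step is to interchange the limits $t\to\infty$ and $u\to\infty$ and to show the remainder is negligible uniformly in $t$; one controls the aggregate weight of the members of $\mathbb{I}_u$ born late using the $L^p$-bound on $W$ and the boundedness of $\Psi$, and the exponential-moment hypothesis is precisely what makes these tail estimates summable. If a full proof is undesirable, all of this can be replaced by a citation of \cite[Theorem 5.4]{Nerman}.
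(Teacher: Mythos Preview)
Your sketch is a faithful outline of the Nerman--Jagers argument, but note that the paper does not prove Theorem~\ref{the-BPmain} at all: immediately after the statement it simply says ``These results are classical (see \cite{Athr2}, \cite{RudValko}, \cite{Jagers}, \cite{Nerman})'' and moves on. So there is no ``paper's own proof'' to compare against; the authors treat this as a black box imported from the literature, exactly the option you mention in your first paragraph.

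Your reconstruction of the proof is essentially the standard one and is correct in outline: the intrinsic martingale $W_t$, the renewal equation for $\e^{-\alpha^* t}\E[\sub{\xi}^\Psi_t]$, the key renewal theorem for the mean, and Nerman's stopping-line decomposition to upgrade to almost-sure convergence. One small caveat: you invoke the exponential-moment hypothesis $\int_0^\infty \e^{-\bar\alpha t}\,\E\xi(dt)<\infty$ as a substitute for the $x\log x$ condition to get $L^1$-convergence of $W_t$, and while it is true that this hypothesis is strictly stronger and suffices, the actual mechanism in Nerman's paper is slightly different (he uses it to control the lattice/non-lattice behaviour and the tail of the renewal measure rather than going through a spine decomposition). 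If you ever need to write out the details, be careful that the ``$L^p$-bound via truncation/spine'' step you sketch is not literally how \cite{Nerman} proceeds, though it does work. For the purposes of this paper, simply citing \cite{Nerman} and \cite{Jagers} is both what the authors do and entirely adequate.
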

These results are classical (see \cite{Athr2}, \cite{RudValko}, \cite{Jagers}, \cite{Nerman}). 
From \eqref{th-expogrowth-f1} it follows immediately that, for any two random characteristics $\Phi$ and $\Psi$, as $t\rightarrow\infty$,
\eqn{
	\label{th-expogrowth-f2}
	\frac{\sub{\xi}^{\Phi}_t}{\sub{\xi}^{\Psi}_t}
		\stackrel{\pr-\mbox{a.s.}}{\longrightarrow}
			\frac{\mathcal{L}(\E[\Phi(\cdot)])(\alpha^*)}{\mathcal{L}(\E[\Psi(\cdot)])(\alpha^*)}.
}
As a consequence, the ratio between the branching process evaluated with the two characteristics $\I_{\{k\}}$ and $\I_{\R^+}$, which is the fraction of individuals with $k$ children, converges to a deterministic limit. We denote this limit by $(p^{\sss(1)}_k)_{k\in\N}$, where
\eqn{
\label{for-pkCTBP}
	p^{\sss(1)}_k = \alpha^*\mathcal{L}(\pr\left(\xi(\cdot)=k\right))(\alpha^*) = \alpha^*\int_0^\infty \e^{-\alpha^* t}\pr\left(\xi(t)=k\right)dt
		= \E\left[\pr(\xi(u)=k)_{u=T_{\alpha^*}}\right].
}
Here $T_{\alpha^*}$ is an exponential random variable with rate $\alpha^*$ independent of $\xi$. Then $(p^{\sss(1)}_k)_{k\in\N}$ is called the {\em limiting degree distribution} for the branching process $\sub{\xi}$. The notation $p^{\sss(1)}_k$ underlines the fact that the CTBP can be seen as a CBP where we fix $m=1$.

\subsection{Structure of the proof of Theorem \ref{theo-general}}
\label{sec-proofstrct}
Our main result requires the following condition:
\begin{Condition}[Lipschitz]
\label{cond-bounded}
	Assume that a birth process $(\xi_t)_{\geq0}$ is supercritical and Malthusian. The {\em  Lipschitz} condition is that, for every $k\in\N$, there exists a constant $0<L(k)<\infty$ such that the function $P_k[\xi](t) = \pr\left(\xi_t = k\right)$ is Lipschitz with constant $L(k)$.
\end{Condition}
Condition \ref{cond-bounded} requires that the functions $(P_k[\xi](t))_{k\in\N}$ associated to the birth process $(\xi_t)_{t\geq0}$ are smooth, in the sense that they do not have dramatic changes over time. We can now state the main result of the paper:
\begin{Theorem}
\label{theo-general}
Let $(\xi_t)_{t\geq0}$ be a supercritical and Malthusian birth process that satisfies Condition \ref{cond-bounded}. Let $(\CBP^{\sss(m)}_t)_{t\geq0}$ be the corresponding collapsed branching process. Let $\Theta$ and $\mu$ be as in Theorem \ref{the-BPmain}. 
 Denote the size of $\CBP^{\sss(m)}_t$ by $N^{\sss(m)}(t)$, and the number of vertices with degree $k$ by $N^{\sss(m)}_k(t)$. Then, as $t\rightarrow\infty$,
\begin{enumerate}
	\item \eqn{
	\label{for:mainT-1}
		m\e^{-\alpha^* t}N^{\sss(m)}(t)\stackrel{\pr-a.s.}{\longrightarrow} \frac{1}{\mu \alpha^*}\Theta;
	}
	\item for every $k\in\N$, there exists $p^{\sss(m)}_k$ such that,
	\eqn{
	\label{for:mainT-2}
		m\e^{-\alpha^* t}N^{\sss(m)}_k(t)\stackrel{\pr}{\longrightarrow} \frac{1}{\mu\alpha^*}p^{\sss(m)}_k\Theta;
	}
	\item As a consequence, 
	\eqn{
	\label{for:mainT-3}
		\frac{N^{\sss(m)}_k(t)}{N^{\sss(m)}(t)}\stackrel{\pr}{\longrightarrow} p_k^{\sss(m)}.
	}
	\end{enumerate}
\end{Theorem}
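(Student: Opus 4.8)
The plan is to deduce all three parts from Theorem~\ref{the-BPmain}, applied to two suitable random characteristics, together with one comparison estimate. For part~\eqref{for:mainT-1}, write $Z_t:=\sub{\xi}^{\I_{\R^+}}_t$ for the number of individuals alive in the underlying $\CTBP$ at time $t$. By Definition~\ref{def-collBP}, vertex $n$ is present in $\CBP^{\sss(m)}_t$ exactly when individual $(n,1)=m(n-1)+1$ is alive, so $N^{\sss(m)}(t)=\lceil Z_t/m\rceil$ and $|m N^{\sss(m)}(t)-Z_t|\le m$. Applying \eqref{th-expogrowth-f1} with $\Psi=\I_{\R^+}$, for which $\La(\E[\Psi(\cdot)])(\alpha^*)=\int_0^\infty\e^{-\alpha^* s}\,ds=1/\alpha^*$, gives $\e^{-\alpha^* t}Z_t\to\Theta/(\mu\alpha^*)$ $\pr$-a.s.; since $\e^{-\alpha^* t}m\to0$ this is exactly \eqref{for:mainT-1}.

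For part~\eqref{for:mainT-2}, introduce the random characteristic
\[
\Phi^{\sss(m)}_k(\omega,s)=\pr\big(\xi^1_s+\cdots+\xi^{m-1}_s=k-\xi(\omega)_s\big),
\]
where $\xi^1,\dots,\xi^{m-1}$ are fresh independent copies of $\xi$ over which the probability is taken and $\xi(\omega)$ denotes the individual's own birth process. Being a deterministic bounded function of $s$ and of $\xi(\omega)_s$, it is a genuine random characteristic in the sense of Definition~\ref{def-charact}, with $\E[\Phi^{\sss(m)}_k(\cdot,s)]=\pr(\xi^1_s+\cdots+\xi^m_s=k)$, so $\La(\E[\Phi^{\sss(m)}_k(\cdot)])(\alpha^*)=\tfrac1{\alpha^*}p^{\sss(m)}_k$ with $p^{\sss(m)}_k$ precisely as in \eqref{for:pmkgeneral}. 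Hence \eqref{th-expogrowth-f1} yields $\e^{-\alpha^* t}\,\sub{\xi}^{\Phi^{\sss(m)}_k}_t\to\tfrac1{\mu\alpha^*}p^{\sss(m)}_k\Theta$ $\pr$-a.s., and it remains only to show
\[
\e^{-\alpha^* t}\big(m N^{\sss(m)}_k(t)-\sub{\xi}^{\Phi^{\sss(m)}_k}_t\big)\stackrel{\pr}{\longrightarrow}0 ,
\]
which gives \eqref{for:mainT-2}; \eqref{for:mainT-3} then follows at once on dividing \eqref{for:mainT-2} by \eqref{for:mainT-1}, using $\pr(\Theta>0)=1$ on the survival event $\{N^{\sss(m)}(t)\to\infty\}$ (Theorem~\ref{the-BPmain}).

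For the comparison, write $\sub{\xi}^{\Phi^{\sss(m)}_k}_t=\sum_{x:\tau_x\le t}\Phi^{\sss(m)}_k(\omega_x,t-\tau_x)$ and, grouping individuals into vertices, $m N^{\sss(m)}_k(t)=\sum_n m\,\I\{\Din_n(t)=k\}$; these two vertex-indexed sums are to be matched term by term. The structural fact behind the argument is that the $m$ individuals $(n,1),\dots,(n,m)$ constituting vertex $n$ are consecutive in birth order, so, as $Z_{\tau_n}=n$, part~\eqref{for:mainT-1} forces $\tau_n-\tfrac1{\alpha^*}\log n$ to converge $\pr$-a.s.\ and hence $\tau_{(n,m)}-\tau_{(n,1)}=\tau_{mn}-\tau_{m(n-1)+1}\to0$. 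Consequently, after discarding the finitely many smallest vertices together with a further $o(\e^{\alpha^* t})$ vertices (those whose group is incomplete at time $t$, or one of whose constituents reproduces in the vanishingly short window just after $t$), every remaining vertex $n$ has all $m$ constituents of age $s_n:=t-\tau_{(n,1)}$ up to a shift at most $\tau_{(n,m)}-\tau_{(n,1)}$; Condition~\ref{cond-bounded} makes the maps $s\mapsto\pr(\xi_s=\ell)$, and hence those occurring in $\Phi^{\sss(m)}_k$, Lipschitz, so that shift is absorbed at a total cost $O\big(\sum_{n\le N^{\sss(m)}(t)}(\tau_{(n,m)}-\tau_{(n,1)})\big)=O(t)=o(\e^{\alpha^* t})$. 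One is then left to estimate $\sum_n\big(m\,\I\{\Din_n(t)=k\}-\sum_{j=1}^m\Phi^{\sss(m)}_k(\omega_{(n,j)},s_n)\big)$, a sum of order $\e^{\alpha^* t}$ bounded terms, each of which is centered given $s_n$: both pieces have conditional mean $m\,\pr(\xi^1_{s_n}+\cdots+\xi^m_{s_n}=k)$, because the constituent birth processes $(\xi^{(n,j)})_{j\le m}$ are i.i.d.\ copies of $\xi$. A second-moment estimate, using that distinct vertices involve disjoint, hence independent, constituent processes, then shows this sum is $o_{\pr}(\e^{\alpha^* t})$, completing the proof. The main obstacle is precisely this concentration step: since the birth times $(\tau_x)$ are themselves measurable functions of the whole family of birth processes, the $m$ constituents of a collapsed vertex are i.i.d.\ copies of $\xi$ only before the genealogy is exposed, whereas the quantities above live on the exposed tree---so identifying the $\sigma$-algebra with respect to which the per-vertex discrepancies are simultaneously centered and asymptotically uncorrelated, and controlling the resulting variance at scale $\e^{2\alpha^* t}$ (where the within-group age gaps and Condition~\ref{cond-bounded} genuinely enter), is the technical heart of the argument.
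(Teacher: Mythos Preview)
Your treatment of parts \eqref{for:mainT-1} and \eqref{for:mainT-3} is correct and matches the paper. The gap is in part \eqref{for:mainT-2}: you correctly identify the obstacle---finding a $\sigma$-algebra under which the per-vertex discrepancies are centered and asymptotically uncorrelated---but you do not resolve it, and the proposal ends precisely where the real work begins.

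The paper's solution is the \emph{bulk filtration}: fix $x(t)\to\infty$ with $x(t)=o(t)$ and condition on $\mathcal{F}_{x(t)}$. The key device is that the birth times $\tau_n$ for $n>\sub{\xi}^{\I_{\R^+}}_{x(t)}$ are replaced by $\mathcal{F}_{x(t)}$-measurable approximations $\sigma_n(x(t))=\tfrac{1}{\alpha^*}\log n-\tfrac{1}{\alpha^*}\log\big(\e^{-\alpha^* x(t)}\sub{\xi}^{\I_{\R^+}}_{x(t)}\big)$, with $\sup_n|\sigma_n(x(t))-\tau_n|\to 0$ a.s. Once the ages are $\mathcal{F}_{x(t)}$-measurable, the degrees of distinct late vertices are conditionally independent, and the paper runs a conditional second moment on $N^{\sss(m)}_k(t)$ itself (Proposition~\ref{prop-condmoments}): it shows $m\e^{-\alpha^* t}\E[N^{\sss(m)}_k(t)\mid\mathcal{F}_{x(t)}]\to \tfrac{1}{\mu}\La(\Phi^{\sss(m)}_k)(\alpha^*)\Theta$ a.s.\ and $\mathrm{Var}(m\e^{-\alpha^* t}N^{\sss(m)}_k(t)\mid\mathcal{F}_{x(t)})=o_{a.s.}(1)$. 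Note that the paper \emph{never} compares $mN^{\sss(m)}_k(t)$ to the random variable $\sub{\xi}^{\Phi^{\sss(m)}_k}_t$; the characteristic (defined with $m$ artificial i.i.d.\ copies attached to each individual, not your version using the individual's own process plus $m-1$ fresh ones) enters only through its \emph{expectation} $\E[\sub{\xi}^{\Phi^{\sss(m)}_k}_{t-x(t)}]$ when computing the conditional mean.

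Your direct-comparison route---showing $\e^{-\alpha^* t}\big(mN^{\sss(m)}_k(t)-\sub{\xi}^{\Phi^{\sss(m)}_k}_t\big)\stackrel{\pr}{\to}0$---is a genuinely different decomposition and could in principle work, but it would still require the same bulk-filtration machinery to justify the second-moment step, and with your characteristic (which depends on the individual's actual process) the difference involves additional within-vertex correlations that the paper's artificial-randomness version sidesteps. As written, the proposal stops short of supplying that mechanism.
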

\noindent 
The sequence $(p_k^{\sss(m)})_{k\in\N}$ is called the {\em limiting degree distribution} of $(\CBP^{\sss(m)}_t)_{t\geq0}$, and is given by
\eqn{
\label{for-general}
	p_k^{\sss(m)} = \alpha^*\mathcal{L}\left(P[\xi](\cdot)^{*m}_k\right)(\alpha^*) = \E\left[P[\xi](T_{\alpha^*})^{*m}_k\right],
} 
where $P_k[\xi](t) = \pr(\xi_t = k)$,
 $T_{\alpha^*}$ is an exponentially distributed random variable with parameter $\alpha^*$, and 
\eqn{
\label{for-convXI}
	P[\xi](t)^{*m}_k = \sum_{k_1+\cdots+k_m=k}P_{k_1}[\xi](t)\cdots P_{k_m}[\xi](t)
}
is the $k$-th element of the $m$-fold convolution of the sequence $(P_k[\xi](t) )_{k\in\N}$.

We now comment on Theorem \ref{theo-general} (for comparison with CTBPs, we refer to Theorem \ref{the-BPmain}). \eqref{for:mainT-1} assures us that the size of a $\CBP$ grows at exponential rate $\alpha^*$ as for the underlying $\CTBP$. Even the size of $\CBP^{\sss(m)}_t$, up to the constant $m$, scales exactly as the size of the $\CTBP$, and the limiting random variable $\Theta$ is the same. This means that the collapsing procedure does not destroy the exponential growth of the graph. 

\eqref{for:mainT-2} assures that, for every $k\in\N$, the number of vertices with in-degree $k$ scales exponentially and also in this case we have a limiting random variable. \eqref{for:mainT-3} tells us that there exists a {\em deterministic} limiting degree distribution for a $\CBP$.

The expression for $(p_k^{\sss(m)})_{k\in\N}$ can be explained in terms of CTBPs. In fact, for a CTBP $\sub{\xi}$, the limiting degree distribution is given by $p^{\sss(1)}_k = \E\left[P_k[\xi](T_{\alpha^*})\right]$,
with $\alpha^*$ the Malthusian parameter of $\sub{\xi}$. We can see $T_{\alpha^*}$ as a {\em time unit} that a process $(\xi_t)_{t\geq0}$ takes to generate, on average, $1$ individual. Then, $p^{\sss(1)}_k$ can be seen as the probability that $(\xi_t)_{t\geq0}$ generates $k$ individuals instead of the average $1$. Using the same heuristic, the limiting degree distribution of $\CBP$ can be seen as the probability that $m$ different individuals produce $k$ children in total in the time unit $T_{\alpha^*}$. Notice that in the expression of $(p_k^{\sss(m)})_{k\in\N}$ the Malthusian parameter $\alpha^*$ is that of the branching process $\sub{\xi}$. 

Unfortunately, the size of CBP and the number of vertices with degree $k\in\N$ are not the evaluation of a CTBP with a random characteristic as in Definition \ref{def-charact}. For example the degree of a vertex in $\CBP$ is the sum of the degrees of $m$ different individuals. The solution for the size of CBP and the number of vertices with degree $k$ is different.
From Definition \ref{def-collBP}, it is obvious that
\eqn{
\label{eq-dimens}
	N^{\sss(m)}(t) = \left\lceil \frac{\sub{\xi}^{\I_{\R^+}}_t}{m}\right\rceil.
}
Using then \eqref{th-expogrowth-f1}, the proof of  \eqref{for:mainT-1} is immediate.

The proof of  \eqref{for:mainT-2} is harder, and it requires a conditional second moment method on $N^{\sss(m)}_k(t)$. Before stating the result, we need a preliminary discussion. We use artificial randomness that we add to the branching process to rewrite the degree of a vertex in CBP in terms of a random characteristic. In the population space in the definition of CTBPs, we consider a single birth process $(\xi^x_t)_{t\geq0}$ for every individual $x$ in the population. We instead consider on every $\Omega_x$ a vector of birth processes
$(\xi^{x,1}_t,\ldots,\xi^{x,m}_t)$,
where $\xi^{x,1}_t,\cdots,\xi^{x,m}_t$ are i.i.d.\ copies of the birth process, defined on the space corresponding to the individual $x$. With this notation, the standard branching processes defined by $(\xi_t)_{t
\geq0}$ is the branching process where we consider as birth process the first component of every vector associated to every individual. 

Now, for $k\in\N$, we consider the random characteristic 
\eqn{
\label{for-PHI}
	\Phi^{\sss(m)}_k(t) = \I_{\{k\}}\left(\xi^{x,1}_{t-\tau_x}+\cdots+\xi^{x,m}_{t-\tau_x}\right),
}
which corresponds to the event that the sum of the components of the vector associated to the individual $x$ when its age is $t-\tau_x$ is equal to $k$. This is a random characteristic that depends only on the randomness defined on the space $\Omega_x$. 
\begin{figure}[t]
	\centering
	\includegraphics[width=0.65\textwidth]{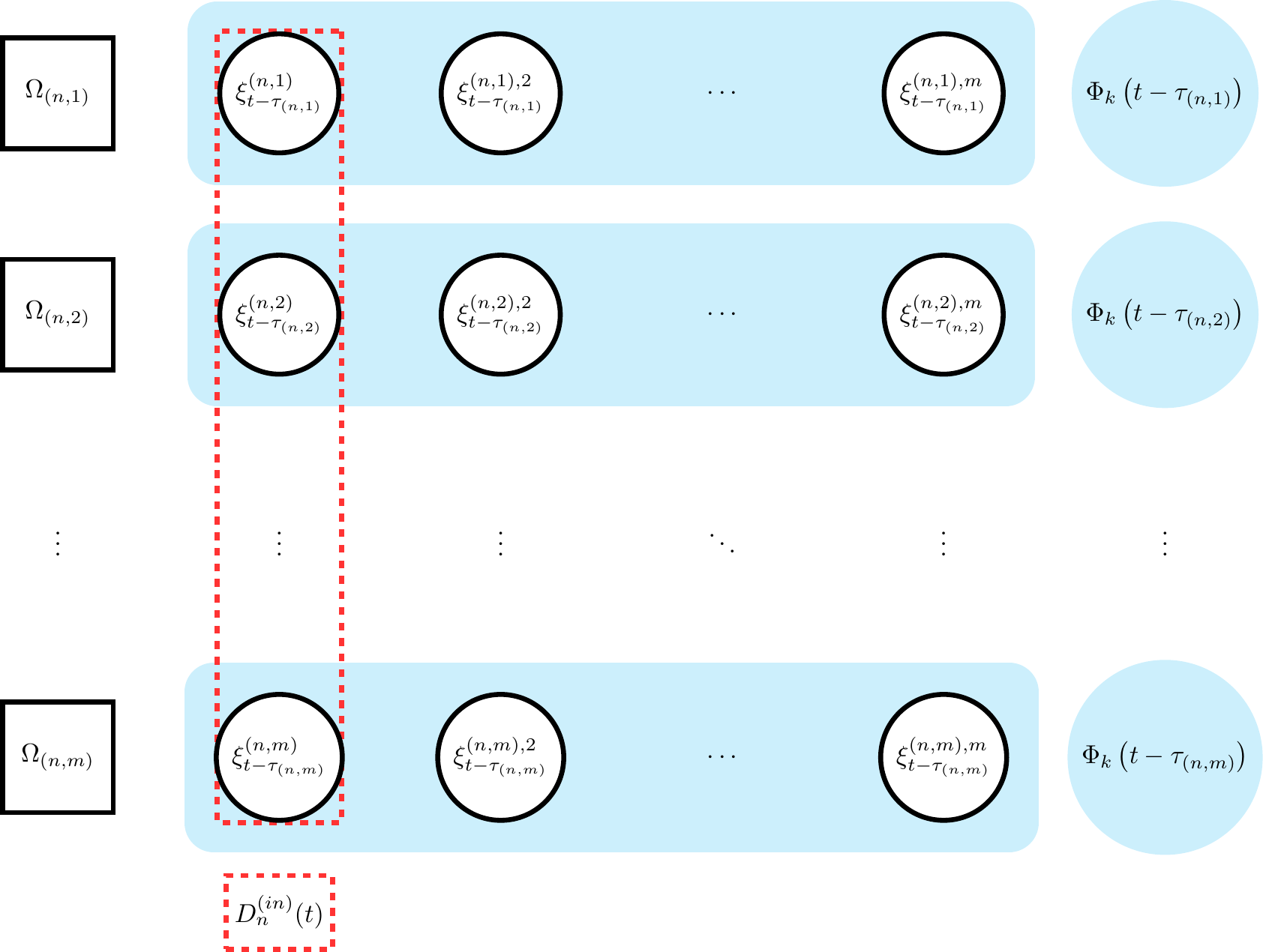}
\end{figure}

The crucial observation is that 
\eqn{
\label{for-crucial}
\begin{split}
	\pr\left(\Din_n(t)=k\right) & = \pr\left(\xi^{(n,1)}_{t-\tau_{(n,1)}}+\cdots+\xi^{(n,m)}_{t-\tau_{(n,m)}}= k\right) \\
	&  \approx \frac{1}{m}\sum_{j=1}^m\pr\left(\xi^{(n,j),1}_{t-\tau_{(n,j)}}+\cdots+\xi^{(n,j),m}_{t-\tau_{(n,j)}}= k\right) \\
	& = \frac{1}{m}\sum_{j=1}^m \E\left[\Phi^{\sss(m)}_k(t-\tau_{(n,j)})\right]+(\mathrm{error}),
\end{split}
}
when we assume that the difference between the birth times $\tau_{(n,1)},\tau_{(n,2)},\ldots,\tau_{(n,m)}$ is very small. The approximation in \eqref{for-crucial} can be explained by the fact that all the components of the vectors $(\xi^{n,1}_t,\ldots,\xi^{n,m}_t)$ are i.i.d. and $\tau_{(n,1)}\approx \tau_{(n,m)}$. In fact, on the left side of \eqref{for-crucial} we have the probability that the sum of $m$ independent copies of $(\xi_t)_{t\geq0}$, evaluated at different times, is equal to $k$. Assuming that the differences between the birth times $\tau_{(n,1)},\tau_{(n,2)},\ldots,\tau_{(n,m)}$ are small, we can just evaluate the $m$ different processes at time $\tau_{(n,1)}$, with a negligible error.

The proof of this, based on Condition \ref{cond-bounded}, is given in Proposition \ref{prop-apprfixt}. It gives the bound on the error term with the difference between the birth rimes of the individuals collapsed to generate the vertex, i.e., the error term is bounded by $mL|\tau_{(n,m)}-\tau_{(n,1)}|$, where $L = \max_{i\in[k]}\{L(i)\}$.

The use of artificial randomness might not seem intuitive. The point is that the equality in expectation between the random characteristic $\Phi^{\sss(m)}_k(t-\tau_{(n,1)})$ and $\Din_n(t)$ is enough. This relies on the fact that, conditionally on the first stages of the branching process, the contribution to the number of vertices with degree $k$ given by the latter individuals is almost deterministic. Let us formalize this idea:

\begin{Definition}[$x$-bulk filtration]
\label{def-bulk}
Consider a branching process $\sub{\xi}$, and its natural filtration $(\mathcal{F}_t)_{t\geq0}$. Consider an increasing function $x(t):\R^+\rightarrow\R^+$.  We call $(\mathcal{F}_{x(t)})_{t\geq0}$ the {\em $x$-bulk filtration} of $\sub{\xi}$. At every time $t\geq0$, a random variable measurable with respect to $\F_{x(t)}$ is called {\em $x$-bulk measurable}.
\end{Definition}

If we consider $x(t)$ to be $o(t)$, then the $x$-bulk filtration heuristically contains information only on the early stage of the CTBP. Nevertheless, the information contained in $\F_{x(t)}$ is enough to estimate the behavior of the CTBP:

\begin{Proposition}[Conditional moments of $N^{\sss(m)}_k(t)$]
\label{prop-condmoments}
Assume that $x$ is a monotonic function such that, as $t\rightarrow\infty$,  $x(t)\rightarrow\infty$ and $x(t) = o(t)$. Then, under the conditions of Theorem \ref{theo-general}, as $t\rightarrow\infty$, 
\begin{enumerate}
	\item \eqn{
	\label{for-aim-2}
			m\e^{-\alpha^* t}\E\left[\left.N^{\sss(m)}_k(t)\right|\F_{x(t)}\right] 
				\stackrel{\pr-a.s.}{\longrightarrow} \frac{1}{\mu}\La\left(\Phi^{\sss(m)}_k(\cdot)\right)(\alpha^*)\Theta;
		}
	\item 
		\eqn{
		\label{for-aim-3}
			\e^{-2\alpha^* t}\E\left[\left.N^{\sss(m)}_k(t)^2\right|\F_{x(t)}\right] 
				\stackrel{\pr-a.s.}{\leq} 
				\left(\e^{-\alpha^* t}\E\left[\left.N^{\sss(m)}_k(t)\right|\F_{x(t)}\right]\right)^2 + o(1).
		}
\end{enumerate}
\end{Proposition}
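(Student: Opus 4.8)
The plan is to exploit the branching property by splitting the population at the "time barrier" $x(t)$: individuals born before time $x(t)$ form a finite (but growing) set of {\em ancestors}, and each of them spawns an independent copy of the branching process which, from the point of view of $N^{\sss(m)}_k(t)$, behaves asymptotically like the first-moment formula of Theorem \ref{the-BPmain}. Concretely, for the first moment I would write $N^{\sss(m)}_k(t)$ (or rather $\sub{\xi}^{\Phi^{\sss(m)}_k}_t$, up to the $O(1)$ rounding/edge corrections coming from \eqref{eq-dimens} and from the fact that the first edge of each vertex cannot be a self-loop) as a sum over the individuals alive at time $x(t)$ of the contribution of their descendant subtrees, each evaluated with the characteristic $\Phi^{\sss(m)}_k$. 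Conditionally on $\F_{x(t)}$ these subtree contributions are independent copies of $\sub{\xi}^{\Phi^{\sss(m)}_k}_{t-\tau_z}$ for ancestors $z$, so
\[
\E\bigl[\sub{\xi}^{\Phi^{\sss(m)}_k}_t \mid \F_{x(t)}\bigr]
= \sum_{z:\,\tau_z\le x(t)} \E\bigl[\sub{\xi}^{\Phi^{\sss(m)}_k}_{s}\bigr]\Big|_{s=t-\tau_z}.
\]
By Theorem \ref{the-BPmain}(1) applied with $\Psi = \Phi^{\sss(m)}_k$ (which is a bona fide bounded random characteristic in the enlarged probability space with the artificial vector randomness), $\e^{-\alpha^* s}\E[\sub{\xi}^{\Phi^{\sss(m)}_k}_{s}] \to \frac1\mu\La(\E[\Phi^{\sss(m)}_k(\cdot)])(\alpha^*)\E[\Theta]$; but more useful is the a.s.\ convergence of $\e^{-\alpha^* s}\sub{\xi}^{\Phi^{\sss(m)}_k}_{s}$ itself to $\frac1\mu\La(\E[\Phi^{\sss(m)}_k(\cdot)])(\alpha^*)\,\Theta_z$, where $\Theta_z$ is the limiting variable of the subtree rooted at $z$. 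Summing, $m\e^{-\alpha^* t}\E[N^{\sss(m)}_k(t)\mid\F_{x(t)}]$ is asymptotic to $\frac1\mu\La(\Phi^{\sss(m)}_k(\cdot))(\alpha^*)\,\e^{-\alpha^* x(t)}\sum_{z}\e^{\alpha^*(x(t)-\tau_z)}\Theta_z$, and the inner sum converges a.s.\ to $\Theta$ by the martingale / consistency property of $\Theta$ (the limit does not depend on the level at which we decompose the tree). The condition $x(t)\to\infty$ is what makes each factor $\e^{-\alpha^*(t-\tau_z)}\sub{\xi}^{\Phi}_{t-\tau_z}$ close to its limit uniformly, and $x(t)=o(t)$ guarantees $t-\tau_z\to\infty$ for every relevant $z$; the $O(1)$ corrections from \eqref{eq-dimens} and the self-loop restriction are $o(\e^{\alpha^* t})$ and hence vanish after normalization.

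For the conditional second moment in \eqref{for-aim-3} I would use the same decomposition: conditionally on $\F_{x(t)}$, $N^{\sss(m)}_k(t) = \sum_z X_z$ with the $X_z$ independent (across the finitely many ancestors $z$ alive at time $x(t)$), so
\[
\E\bigl[N^{\sss(m)}_k(t)^2\mid\F_{x(t)}\bigr]
= \sum_z \E[X_z^2\mid\F_{x(t)}] + \sum_{z\ne z'}\E[X_z\mid\F_{x(t)}]\,\E[X_{z'}\mid\F_{x(t)}]
\le \sum_z \E[X_z^2\mid\F_{x(t)}] + \bigl(\E[N^{\sss(m)}_k(t)\mid\F_{x(t)}]\bigr)^2 .
\]
So it suffices to show the diagonal term $\sum_z \E[X_z^2\mid\F_{x(t)}]$ is $o(\e^{2\alpha^* t})$. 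Here I would bound $X_z^2 \le$ (size of subtree $z$ at time $t$)$^2$ and use a second-moment estimate for $\sub{\xi}^{\I_{\R^+}}_{t-\tau_z}$ — i.e.\ $\E[(\sub{\xi}^{\I_{\R^+}}_{s})^2] = O(\e^{2\alpha^* s})$, which follows from the classical $L^2$ theory of supercritical branching processes under the moment condition $\int_0^\infty \e^{-\bar\alpha t}\E\xi(dt)<\infty$ assumed in Theorem \ref{the-BPmain}. Then $\e^{-2\alpha^* t}\sum_z \E[X_z^2\mid\F_{x(t)}] \le C\,\e^{-2\alpha^* t}\sum_z \e^{2\alpha^*(t-\tau_z)} = C\,\e^{-2\alpha^* x(t)}\sum_z \e^{2\alpha^*(x(t)-\tau_z)}$, and since $\sum_z \e^{-\alpha^*\tau_z}$ converges a.s.\ while we still have one extra factor $\e^{-\alpha^* x(t)}\to 0$ to spare, the whole expression is $o(1)$ a.s. That is precisely where $x(t)\to\infty$ is used in the second-moment bound.

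The main obstacle, and the step that needs the most care, is making the first-moment decomposition rigorous despite the artificial vector randomness and the mismatch between $\sub{\xi}^{\Phi^{\sss(m)}_k}_t$ and the genuine quantity $N^{\sss(m)}_k(t)$: the degree of vertex $n$ in $\CBP$ is $\sum_{j=1}^m \xi^{(n,j)}_{t-\tau_{(n,j)}}$ with the $m$ individuals born at genuinely different times $\tau_{(n,1)},\dots,\tau_{(n,m)}$, whereas $\Phi^{\sss(m)}_k$ evaluates $m$ i.i.d.\ copies all at the single age $t-\tau_x$. Controlling this discrepancy is exactly the content of the approximation \eqref{for-crucial} and of Proposition \ref{prop-apprfixt}, whose error term $mL\,|\tau_{(n,m)}-\tau_{(n,1)}|$ must be shown to be negligible after summing over all vertices and normalizing by $\e^{\alpha^* t}$; this requires an estimate on $\sum_n \E[|\tau_{(n,m)}-\tau_{(n,1)}|]$ (equivalently, on the typical spacing of the birth times, which is $O(\e^{-\alpha^* t})$ near time $t$), so that the total error is $o(\e^{\alpha^* t})$. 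A secondary technical point is verifying that $\Phi^{\sss(m)}_k$ satisfies the hypotheses of Theorem \ref{the-BPmain} as a random characteristic in the enlarged space — it is bounded by $1$ and depends on $\omega$ only through $\tau_x$ and the (extended) birth processes of $x$ — and that the limiting variable $\Theta$ obtained via this characteristic coincides with the one in \eqref{for:mainT-1}, which follows since $\Theta$ is intrinsic to the genealogy and independent of the choice of characteristic.
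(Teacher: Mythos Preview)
Your subtree decomposition is not correct as stated. You sum over all individuals $z$ with $\tau_z\le x(t)$ and claim that, conditionally on $\F_{x(t)}$, each contributes an independent copy $\sub{\xi}^{\Phi}_{t-\tau_z}$; but these subtrees are nested, not disjoint --- if $z'$ is a descendant of $z$ and both are born before $x(t)$, the subtree of $z'$ sits inside that of $z$ --- so your displayed identity overcounts massively (the root term alone already equals $\E[\sub{\xi}^{\Phi}_t]$). The same issue makes the ``consistency of $\Theta$'' step fail: $\sum_{z:\tau_z\le x(t)}\e^{-\alpha^*\tau_z}\Theta_z$ is not $\Theta$ but strictly larger, since the root already contributes $\Theta$. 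A correct stopping-line decomposition would sum over individuals born just \emph{after} $x(t)$ whose parent was born before; but then those birth times are not $\F_{x(t)}$-measurable, and for a general (non-Markovian) birth process the residual reproduction of each ancestor after time $x(t)$ is not a fresh copy of $(\xi_t)_{t\ge0}$, so carrying this through is substantially more delicate than you indicate. There is a second, independent gap in your variance bound: the estimate $\E[(\sub{\xi}^{\I_{\R^+}}_s)^2]=O(\e^{2\alpha^* s})$ is an $L^2$ statement that does \emph{not} follow from the hypotheses of Theorem~\ref{the-BPmain}, which constrain only the mean measure $\E\xi(dt)$.

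The paper's route is quite different and sidesteps both problems. It does not decompose into subtrees at all; instead it replaces the future birth times $\tau_n$, $n>\sub{\xi}^{\I_{\R^+}}_{x(t)}$, by the explicit $\F_{x(t)}$-measurable approximants $\sigma_n(x(t))=\tfrac{1}{\alpha^*}\log n-\tfrac{1}{\alpha^*}\log\bigl(\e^{-\alpha^* x(t)}\sub{\xi}^{\I_{\R^+}}_{x(t)}\bigr)$ of Lemma~\ref{lem-sigma}. Once the birth times are treated as known, the birth processes $\xi^{(n,j)}$ for distinct $n$ are genuinely conditionally independent, so $\pr(\Din_n(t)=k\mid\F_{x(t)})$ is an explicit convolution; Proposition~\ref{prop-apprfixt} collapses the $m$ times $\sigma_{(n,1)},\dots,\sigma_{(n,m)}$ to one with a telescoping error, and the resulting sum over $n$ is evaluated by matching the counting measure $\sum_p\delta_{\{(\log p)/\alpha^*\}}$ to $\mu\alpha^*\,\E[\sub{\xi}^{\I_{\R^+}}_{\cdot}]$. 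For the second moment the same conditional independence makes the off-diagonal indicators factor exactly, giving \eqref{for-aim-3} directly with no $L^2$ input on the population size.
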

We point out that if $X\leq Y+o(1)$, then $o(1)$ is a term that converges almost surely to 0. The proof of Proposition \ref{prop-condmoments} is moved to Section \ref{sec-momentMethod}. With Proposition \ref{prop-condmoments} in hand, we can prove \eqref{for:mainT-2}. We bound $\left| m\e^{-\alpha^* t}N_k^{\sss(m)}(t)-\frac{1}{\mu}\La(\Phi^{\sss(m)}_k(\cdot))(\alpha^*)\Theta\right| $ by
\eqn{
\label{for-mainT-concl-1}
	\left| m\e^{-\alpha^* t}N_k^{\sss(m)}(t)- m\e^{-\alpha^* t}\E[N^{\sss(m)}_k(t)|\F_{x(t)}]\right|+
		\left| m\e^{-\alpha^* t}\E[N^{\sss(m)}_k(t)|\F_{x(t)}]-\frac{1}{\mu}\La(\Phi^{\sss(m)}_k(\cdot))(\alpha^*)\Theta\right|.
}
As a consequence, \eqref{for:mainT-2} holds if both terms in \eqref{for-mainT-concl-1} converges $\pr$-a.s. to zero. For the second term this is true by \eqref{for-aim-2}. For the first term, we use \eqref{for-aim-2} and \eqref{for-aim-3} to conclude that $\mathrm{Var}\left(m\e^{-\alpha^* t}N^{\sss(m)}_k(t)|\F_{x(t)}\right) = o_{a.s.}(1)$, so that
\eqn{
\label{for-condVariance}
	\left| m\e^{-\alpha^* t}N_k^{\sss(m)}(t)- m\e^{-\alpha^* t}\E[N^{\sss(m)}_k(t)|\F_{x(t)}]\right| \stackrel{\pr}{\displaystyle \longrightarrow}0.
}
This concludes the proof of \eqref{for:mainT-2}. \eqref{for:mainT-3} follows immediately.

\begin{Remark}[Times and bulk sigma-field]
\label{remark:bulk:function}
{\rm We have proved Proposition \ref{prop-condmoments} (and thus Theorem \ref{theo-general}) by looking at the CTBP at time $t$, considering the $x(t)$-bulk sigma-field. We can extend the argument as follows. Consider $s\geq0$, and let $y\colon \R^+\rightarrow\R^+$ be a  monotonic function of $s$ such that $y(s)/s\rightarrow\infty$ as $s\rightarrow \infty$. In this case, looking at the graph at time $y(s)$, and considering the $s$-bulk sigma-field, Proposition \ref{prop-condmoments} still holds. More generally, as suggested by \eqref{for-heur-1} below, conditionally on the $s$-bulk sigma-field, the evolution of a CTBP is almost deterministic. This implies that Proposition \ref{prop-condmoments} even holds when we consider a {\em random} process $Y(s)$ such that $Y(s)/s\stackrel{a.s.}{\rightarrow}\infty$, under the assumption that $Y(s)$ is $s$-bulk measurable for every $s\geq0$. These observations will be useful when extending our results to discrete time in Section \ref{sec:discrete}.}
\end{Remark}

\section{Preliminaries on birth times}
\label{sec-times}
\subsection{Bound on the difference in time}
\label{sec-difftime}
In this section, we prove the fact that the error term in \eqref{for-crucial} can be bounded by the difference of birth times of the considered individuals. We introduce the definition of convolution:
\begin{Definition}[Convolution]
\label{def-conv}
We define convolution between two sequences $(a_k)_{k\in\N}$ and $(b_k)_{k\in\N}$ as
\eqn{
	(a*b)_k:=  \sum_{l=0}^k a_lb_{k-l}.
}
\end{Definition}
With this definition, we can state a technical lemma we need to prove the bound we are interested in:
\begin{Lemma}[Difference in times]
\label{lem-differencetime}
Consider the sequence of functions $(P[\xi]_k(t))_{k\in\N}$. If $(\xi_t)_{t\geq0}$ satisfies Condition \ref{cond-bounded}, then, for every $x\in\R^+$, and for every $h_i\leq x$ for $i\in[n]$, 
\eqn{
\label{for-differencetime}
	\left|\left(P[\xi](x-h_1)*\cdots*P[\xi](x-h_m)\right)_k -\left(P[\xi](x-h_1)^{*m}\right)_k\right| \leq L\sum_{j=2}^{m}|h_1-h_j|,
}
where $L = \max_{i\in[k]}L(i)$.
\end{Lemma}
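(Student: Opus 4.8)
The plan is to prove the bound by induction on $m$, peeling off one factor of the convolution at a time and using the telescoping identity that replaces $P[\xi](x-h_j)$ by $P[\xi](x-h_1)$ in the $j$-th slot. First I would record the elementary fact that for sequences $(a_k),(b_k),(c_k)$ with $\sum_k a_k = \sum_k c_k = 1$ and $0\le b_k\le 1$, one has the bound $\left|(a*b)_k - (c*b)_k\right| \le \sum_{l=0}^k |a_l - c_l|$, and more usefully that if $\sup_l |a_l - c_l| \le \eta$ then actually $\left|(a*b)_k - (c*b)_k\right|$ is controlled once we also know $a$ and $c$ are probability sequences — but the cleanest route is to keep the difference $P_l[\xi](x-h_1) - P_l[\xi](x-h_j)$ and bound each such term by $L(l)\,|h_1 - h_j| \le L\,|h_1-h_j|$ using Condition \ref{cond-bounded}. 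The key structural point is that in a convolution $\sum_{k_1+\cdots+k_m = k} a^{(1)}_{k_1}\cdots a^{(m)}_{k_m}$, for each fixed $k$ only finitely many indices $k_i \le k$ contribute, and the "other" factors always sum to at most $1$ (they are probability sequences), so replacing one factor's sequence by another changes the total by at most the $\ell^\infty$-distance between the two sequences on $\{0,\ldots,k\}$, times $1$.

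Concretely, I would write the telescoping decomposition
\eqn{
\label{for-telescope-plan}
\left(P[\xi](x-h_1)* P[\xi](x-h_2)*\cdots*P[\xi](x-h_m)\right)_k
 - \left(P[\xi](x-h_1)^{*m}\right)_k
 = \sum_{j=2}^m \Delta_j,
}
where $\Delta_j$ is the difference between the convolution in which slots $2,\ldots,j$ have been replaced by $P[\xi](x-h_1)$ and the one in which slots $2,\ldots,j-1$ have been so replaced; thus $\Delta_j$ involves exactly one slot carrying the difference $P[\xi](x-h_j) - P[\xi](x-h_1)$, with all other $m-1$ slots carrying genuine probability sequences. For each $\Delta_j$ I would bound
$$
|\Delta_j| \;\le\; \sum_{l=0}^k \bigl|P_l[\xi](x-h_j) - P_l[\xi](x-h_1)\bigr|\,\cdot\,\Bigl(\text{mass of the other factors}\Bigr)
\;\le\; \max_{l\in[k]} L(l)\,|h_1-h_j| \;=\; L\,|h_1-h_j|,
$$
where I have used that summing the remaining $m-1$ probability sequences over their free indices gives exactly $1$ (after summing the $l$-slot against the rest, the constraint $k_1+\cdots = k$ with one index fixed leaves a partial convolution of probability sequences evaluated at $k - l \ge 0$, which is at most $1$), and then applied the Lipschitz bound $|P_l[\xi](s) - P_l[\xi](s')| \le L(l)|s-s'|$ pointwise in $l$. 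Summing \eqref{for-telescope-plan} over $j$ gives the claimed $L\sum_{j=2}^m |h_1-h_j|$.

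The main obstacle I anticipate is purely bookkeeping rather than conceptual: making the "mass of the other factors is at most $1$" step fully rigorous requires care, because after fixing the index $l$ in the slot carrying the difference, the remaining sum is $\sum_{k_1+\cdots+k_{m-1} = k-l} \prod (\text{prob. sequences})$, which is indeed $\le 1$ (it equals the $(k-l)$-th entry of an $(m-1)$-fold convolution of probability sequences), and one must also confirm $x - h_j \ge 0$ so that $P[\xi](x-h_j)$ is genuinely defined as $\pr(\xi_{x-h_j} = \cdot)$ — this is exactly the hypothesis $h_i \le x$. A minor subtlety is that the $h_j$ need not be ordered, so $|h_1 - h_j|$ rather than $h_1 - h_j$ appears, which is harmless since the Lipschitz bound is symmetric. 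I do not expect to need the supercritical/Malthusian part of Condition \ref{cond-bounded}, only its Lipschitz clause.
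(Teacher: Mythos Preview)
Your proposal is correct and follows essentially the same approach as the paper: both replace the factors $P[\xi](x-h_j)$ by $P[\xi](x-h_1)$ one at a time, bound each single-slot replacement via the Lipschitz estimate from Condition~\ref{cond-bounded}, and use that the remaining factors form a probability sequence whose total mass over $\{0,\ldots,k\}$ is at most $1$. The only cosmetic difference is that the paper packages this as an induction on $m$ (base case $m=2$, then peel off the $m$-th factor), whereas you write the full telescope $\sum_{j=2}^m \Delta_j$ directly; the underlying estimate is identical.
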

\begin{proof}
Without loss of generality, assume $0\leq h_1\leq \ldots\leq h_n$. We prove Lemma \ref{lem-differencetime} by induction on $m$. We start the induction with $m=2$, so
\eqn{
\label{for-cnvl-1}
	\left(P[\xi](x-h_1)*P[\xi](t-h_2)\right)_k = \sum_{l=0}^kP[\xi]_l(x-h_1)P[\xi]_{k-l}(x-h_2).
}
We now use Condition \ref{cond-bounded} to bound $\left|P[\xi]_{k-l}(x-h_2) - P[\xi]_{k-l}(x-h_1)\right|\leq +L(k-l)(h_2-h_1)$.
Using this in
\eqref{for-cnvl-1}, then  we obtain, for $L= \max_{i\in[k]}L(i)$, 
\eqn{
\label{for-cnvl-5}
	\left|\left(P[\xi](x-h_1)*P[\xi](t-h_2)\right)_k-\left(P[\xi](x-h_1)^{*2}\right)_k\right|  
\leq L\sum_{l=0}^kP[\xi]_{k-l}(x-h_1)\left|(h_2-h_1)\right|.  
}
Since $\sum_{l=0}^kP_l[\xi](x-h_1)= P[\xi]_{\leq k}(x-h_1)\leq 1$, 
$$
	\left|\left(P[\xi](x-h_1)*P[\xi](t-h_2)\right)_k-\left(P[\xi](x-h_1)^{*2}\right)_k\right|\leq L|h_2-h_1|,
$$
so \eqref{for-differencetime} holds for $m=2$. We now advance the induction hypothesis, so suppose that \eqref{for-differencetime} holds for $m-1$. We can write
\eqn{
\label{for-cnvl-6}
	\left(P[\xi](x-h_1)*\cdots*P[\xi](x-h_m)\right)_k  = \sum_{l=0}^k\left(P[\xi](x-h_1)*\cdots*P[\xi](x-h_{m-1})\right)_lP[\xi]_{k-l}(x-h_m).
}
Notice that we can apply \eqref{for-differencetime} to the first terms in the sum in \eqref{for-cnvl-6} thanks to the induction hypothesis, since it is now the convolution of $m-1$ functions. We just need to replace $P[\xi]_{k-l}(x-h_m)$ by $P[\xi]_{k-l}(x-h_1)$. It is easy to do this using a similar argument used to prove the bound in \eqref{for-cnvl-5}, which implies again the use of Condition \ref{cond-bounded}. In the end, we have 
\eqn{
\left|\left(P[\xi](x-h_1)*\cdots*P[\xi](x-h_m)\right)_k -\left(P[\xi](x-h_1)^{*m}\right)_k\right| \leq L\sum_{j=2}^{m-1}|h_1-h_j|+L|h_m-h_1|,
}
where the $m-1$ terms comes from the induction hypothesis, and the last one from the approximation of $P[\xi]_{k-l}(x-h_m)$. This completes the proof.
\end{proof}

Lemma \ref{lem-differencetime} holds for every time $t$ and $h_1,\ldots,h_m$ that we consider.  We can now prove the bound on the error term in \eqref{for-crucial}:

\begin{Proposition}[Approximation at fixed time]
\label{prop-apprfixt}
	Consider $(\CBP^{\sss(m)}_t)_{t\geq0}$ obtained from a branching process $\sub{\xi}$. Assume that $(\xi_t)_{t\geq0}$ satisfies Condition \ref{cond-bounded}. Then, for every $k\in\N$, with $L$ as in Lemma \ref{lem-differencetime}, $\pr$-a.s. for every $n\in\N$,
	\eqn{
	\label{for-apprfixt}
		\left|\pr\left(\Din_n(t)=k~|~\tau_{(n,1)},\ldots,\tau_{(n,m)}\right)-\left(P[\xi](t-\tau_{(n,1)})^{*m}\right)_k\right| \leq Lm|\tau_{(n,m)}-\tau_{(n,1)}|.
	}
\end{Proposition}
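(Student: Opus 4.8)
The plan is to derive \eqref{for-apprfixt} from two ingredients: an exact expression for the conditional law of $\Din_n(t)$ given the birth times of the $m$ individuals collapsed into vertex $n$, and the time‑shift estimate of Lemma \ref{lem-differencetime}. Since Lemma \ref{lem-differencetime} already does the analytic work, the Proposition will follow almost immediately.

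First I would identify the conditional law. By Definition \ref{def-collBP}, $\Din_n(t)=\sum_{j=1}^m\xi^{(n,j)}_{t-\tau_{(n,j)}}$, where $\xi^{(n,1)},\ldots,\xi^{(n,m)}$ are the reproduction processes of the individuals $(n,1),\ldots,(n,m)$. By the product structure of the probability space in Definition \ref{def-BP}, conditionally on the birth times $\tau_{(n,1)},\ldots,\tau_{(n,m)}$ these $m$ processes are independent copies of $(\xi_t)_{t\geq0}$ — this is exactly the fact underlying the heuristic \eqref{for-crucial} — so that $\xi^{(n,j)}_{t-\tau_{(n,j)}}$ has law $(P_\ell[\xi](t-\tau_{(n,j)}))_{\ell\in\N}$ given $\tau_{(n,j)}$, and the summands are conditionally independent. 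Hence, $\pr$‑a.s.,
\[
	\pr\big(\Din_n(t)=k\mid \tau_{(n,1)},\ldots,\tau_{(n,m)}\big)
	=\big(P[\xi](t-\tau_{(n,1)})*\cdots*P[\xi](t-\tau_{(n,m)})\big)_k,
\]
where for a negative argument $s$ we read $P[\xi](s)=(\I_{\{\ell=0\}})_{\ell\in\N}=P[\xi](0)$, consistently with $\xi_s=0$; this accounts for any individual $(n,j)$ not yet born at time $t$.

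Next I would apply Lemma \ref{lem-differencetime}. Since individuals are labelled in birth order, $\tau_{(n,1)}\leq\tau_{(n,2)}\leq\cdots\leq\tau_{(n,m)}$, so taking $x=t$ and $h_j=\tau_{(n,j)}$ for $j\in[m]$, and supposing first $t\geq\tau_{(n,m)}$ so that $h_j\leq x$ for all $j$, Lemma \ref{lem-differencetime} gives
\[
	\Big|\big(P[\xi](t-\tau_{(n,1)})*\cdots*P[\xi](t-\tau_{(n,m)})\big)_k-\big(P[\xi](t-\tau_{(n,1)})^{*m}\big)_k\Big|
	\leq L\sum_{j=2}^{m}\big(\tau_{(n,j)}-\tau_{(n,1)}\big)\leq Lm\,|\tau_{(n,m)}-\tau_{(n,1)}|,
\]
with $L=\max_{i\in[k]}L(i)$, using $\tau_{(n,j)}-\tau_{(n,1)}\leq\tau_{(n,m)}-\tau_{(n,1)}$ and $m-1\leq m$. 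Combining with the previous display via the triangle inequality yields \eqref{for-apprfixt} for $t\geq\tau_{(n,m)}$. The remaining ranges — $t<\tau_{(n,1)}$, where both sides reduce to $\I_{\{k=0\}}$ and the bound is trivial, and $\tau_{(n,1)}\leq t<\tau_{(n,m)}$ — are handled by the same computation after splitting the $m$ individuals into those born by time $t$ and those not, the latter contributing the factor $P[\xi](0)$ to the convolution and being moved to $t-\tau_{(n,1)}$ at cost at most $L(t-\tau_{(n,1)})\leq L(\tau_{(n,j)}-\tau_{(n,1)})$ by Condition \ref{cond-bounded}. Finally, since the bound just obtained is a deterministic inequality valid for every ordered $m$‑tuple of birth times, and birth times are ordered $\pr$‑a.s., the statement holds $\pr$‑a.s. simultaneously for all $n\in\N$.

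The point that needs a little care, rather than being a genuine obstacle, is the conditional‑independence assertion of the first step: it is the reason the conditional law of $\Din_n(t)$ is an $m$‑fold convolution of the functions $P[\xi]$ evaluated at the (generically distinct) ages $t-\tau_{(n,j)}$ rather than at a single age, and it is precisely this spread of ages that the error term $Lm|\tau_{(n,m)}-\tau_{(n,1)}|$ quantifies.
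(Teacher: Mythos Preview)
Your proof is correct and follows the paper's own argument almost verbatim: express the conditional law of $\Din_n(t)$ as the $m$-fold convolution \eqref{for-cond-tau-conv} by conditional independence, then apply Lemma \ref{lem-differencetime} with $h_j=\tau_{(n,j)}$ and bound each summand by the maximal gap. Your additional discussion of the ranges $t<\tau_{(n,m)}$ is extra care the paper omits, but it changes nothing in the approach.
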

\begin{proof}
Conditionally on the birth times, the processes $(\xi^{(n,1)}_t)_{t\geq0},\ldots,(\xi^{(n,m)}_t)_{t\geq0}$ are independent. As a consequence,
\eqn{
\label{for-cond-tau-conv}
\pr\left(\Din_n(t)=k~|~\tau_{(n,1)},\ldots,\tau_{(n,m)}\right) = \left(P[\xi](t-\tau_{(n,1)})*\cdots*P[\xi](t-\tau_{(n,m)})\right)_k.
}
Then \eqref{for-apprfixt} follows immediately from Lemma \ref{lem-differencetime}, where we consider $h_1 = \tau_{(n,1)},\ldots,h_m = \tau_{(n,m)}$, and the fact that $\tau_{(n,j)}-\tau_{(n,1)}\leq \tau_{(n,m)}-\tau_{(n,1)}$ for every $j=1,\ldots,m$.
\end{proof}

\subsection{Replacing birth times with $\mathcal{F}_t$-measurable approximations}
\label{sec-remark-btim}
Recall that $\F_t$ denotes the natural filtration of the CTBP up to time $t$. It is possible to rewrite \eqref{th-expogrowth-f1} as
$$
	n\e^{-\alpha^* \tau_n}\stackrel{\pr-a.s.}{\longrightarrow}\frac{1}{\mu\alpha^*}\Theta.
$$
As a consequence, as $n\rightarrow\infty$, 
\eqn{
\label{for-taun-conv}
	-\tau_n +\frac{1}{\alpha^*}\log n\stackrel{\pr-a.s.}{\longrightarrow}\frac{1}{\alpha^*}	
			\log\left(\frac{1}{\mu \alpha^*}\Theta\right).
}
Notice that on the event $\{\sub{\xi}_t^{\I_{\R^+}}\rightarrow\infty\}$, $\Theta$ is positive with probability $1$, so $\log\left(\frac{1}{\mu \alpha^*}\Theta\right)$ is well defined. Define, for $n\geq \sub{\xi}^{\I_{\R^+}}_t$,
\eqn{
\label{for-sigmaDEF}
	\sigma_n(t) := \frac{1}{\alpha^*}\log n -\frac{1}{\alpha^*}\log\left(\frac{1}{\mu\alpha^*}\Theta_t\right),\quad
			\mbox{ where } \quad\Theta_t = \mu\alpha^*\e^{-\alpha^* t}\sub{\xi}^{\I_{\R^+}}_t.
}
Then $\sigma_n(t)$ is an approximation of $\tau_n$ given the information up to time $t$, where the factor $\Theta_t$ includes the stochastic fluctuation of the size of the branching process.
 What is interesting is that the random variable $\sigma_n(t)$ is an approximation of $\tau_n$ {\em measurable with respect to} $\F_t$. We now prove that $(\sigma_n(t))_{t\geq0}$ is an acceptable approximation of $\tau_n$:

\begin{Lemma}[Error of $(\sigma_n(t))_{t\geq0}$]
\label{lem-sigma}
$\pr$-a.s., as $t\rightarrow\infty$,
\eqn{
\label{for:times-1}
		\sup_{n\geq \sub{\xi}^{\I_{\R^+}}_t}\left|\sigma_n(t)-\tau_n\right|\rightarrow0.
}
\end{Lemma}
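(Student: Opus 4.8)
The plan is to reduce the supremum over $n \geq \sub{\xi}^{\I_{\R^+}}_t$ to a statement about the tail of the sequence $(\tau_n)_n$, using the two facts already at our disposal: the precise asymptotics \eqref{for-taun-conv} for $\tau_n$, and the fact that $\Theta_t = \mu\alpha^*\e^{-\alpha^* t}\sub{\xi}^{\I_{\R^+}}_t \to \Theta$ $\pr$-a.s.\ (Theorem \ref{the-BPmain}(1) with $\Psi = \I_{\R^+}$). First I would write, for $n \geq \sub{\xi}^{\I_{\R^+}}_t =: N(t)$,
\eqn{
\label{for:sigma-split}
	\sigma_n(t) - \tau_n = \left(\frac{1}{\alpha^*}\log n - \tau_n - \frac{1}{\alpha^*}\log\Big(\frac{\Theta}{\mu\alpha^*}\Big)\right) + \frac{1}{\alpha^*}\log\frac{\Theta}{\Theta_t}.
}
The second term does not depend on $n$ and tends to $0$ $\pr$-a.s.\ by the convergence $\Theta_t \to \Theta$ (on the survival event, where $\Theta > 0$, so the logarithm is well defined); thus it contributes a quantity that is $o(1)$ uniformly in $n$. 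So the whole problem is to control $\sup_{n \geq N(t)} \big| \tfrac{1}{\alpha^*}\log n - \tau_n - \tfrac{1}{\alpha^*}\log(\Theta/(\mu\alpha^*))\big|$.

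Next I would observe that \eqref{for-taun-conv} says precisely that the sequence $a_n := \tfrac{1}{\alpha^*}\log n - \tau_n - \tfrac{1}{\alpha^*}\log(\Theta/(\mu\alpha^*))$ converges to $0$ $\pr$-a.s.\ as $n \to \infty$. A sequence converging to $0$ has $\sup_{n \geq M}|a_n| \to 0$ as $M \to \infty$. Since $N(t) = \sub{\xi}^{\I_{\R^+}}_t \to \infty$ $\pr$-a.s.\ on the survival event (this is exactly the event in Theorem \ref{the-BPmain}(2)), we get $\sup_{n \geq N(t)}|a_n| \to 0$ $\pr$-a.s.\ as $t \to \infty$. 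Combining this with the uniform $o(1)$ bound on the second term of \eqref{for:sigma-split} gives $\sup_{n \geq N(t)}|\sigma_n(t) - \tau_n| \to 0$ $\pr$-a.s., which is \eqref{for:times-1}.

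The only genuine subtlety — and the step I would write most carefully — is the interplay between the $n$-limit and the $t$-limit: we are not fixing $n$ but letting the index range start at the random, $t$-dependent threshold $N(t)$. The clean way to handle this is the elementary lemma that if $a_n \to 0$ and $M_t \to \infty$ then $\sup_{n \geq M_t}|a_n| \to 0$; this is purely deterministic once we are on the full-probability event where both $a_n \to 0$ and $N(t) \to \infty$ hold, and both of those are supplied verbatim by \eqref{for-taun-conv} and Theorem \ref{the-BPmain}. One should also note that everything is restricted to the survival event $\{\sub{\xi}_t^{\I_{\R^+}} \to \infty\}$, off which $\Theta$ may vanish and $\sigma_n(t)$ is not even defined; on the complementary event the branching process dies out, $N(t)$ is eventually constant, and the statement is either vacuous or trivial, so there is no loss. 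No second-moment or convergence-in-probability machinery is needed here — the whole lemma is a soft consequence of the a.s.\ convergences established earlier.
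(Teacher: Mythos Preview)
Your proof is correct and follows essentially the same approach as the paper: the same two-term decomposition \eqref{for:sigma-split}, with the $n$-independent piece handled by $\Theta_t\to\Theta$ and the remaining piece handled by the tail-supremum of the convergent sequence $a_n$ from \eqref{for-taun-conv} together with $\sub{\xi}^{\I_{\R^+}}_t\to\infty$. Your write-up is in fact a bit more careful than the paper's about the survival-event restriction and the deterministic $\sup_{n\ge M_t}|a_n|\to 0$ step.
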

\begin{proof}
For every $t\geq0$ and $n\geq \sub{\xi}^{\I_{\R^+}}_t$ we write 
\eqn{
\label{for:times-2}
	\left|\sigma_n(t)-\tau_n\right|\leq \left|\frac{1}{\alpha^*}\log n-\tau_n-\log \left(\frac{1}{\mu\alpha^*}\Theta\right)\right|+
		\left|\log\left(\frac{1}{\mu\alpha^*}\Theta\right)-\log \left(\frac{1}{\mu \alpha^*}\Theta_t\right)\right|.
}
Using \eqref{for:times-2} in \eqref{for:times-1}, we can bound
\eqn{
\label{for:times-3} 
	\sup_{n\geq \sub{\xi}^{\I_{\R^+}}_t}\left|\sigma_n(t)-\tau_n\right|\leq \left|\log\left(\frac{1}{\mu\alpha^*}\Theta\right)-\log \left(\frac{1}{\mu \alpha^*}\Theta_t\right)\right|+
		\sup_{n\geq \sub{\xi}^{\I_{\R^+}}_t} \left|\frac{1}{\alpha^*}\log n-\tau_n-\log \left(\frac{1}{\mu\alpha^*}\Theta\right)\right|.
}
First of all, from \eqref{th-expogrowth-f1} we know $\Theta_t/(\mu\alpha^*)= \e^{-\alpha^* t}\sub{\xi}^{\I_{\R^+}}_t\rightarrow \Theta/(\mu\alpha^*)$. As a consequence, the first term in the right hand side of \eqref{for:times-3} converges $\pr$-a.s. to zero. For the second term, we use \eqref{for-taun-conv} and the fact that the supremum decreases as $\sub{\xi}^{\I_{\R^+}}_t\rightarrow\infty$. This completes the proof.
\end{proof}

Lemma \ref{lem-sigma} suggests that, conditionally on $\F_t$, we can replace the birth sequence $(\tau_n)_{n\geq \sub{\xi}_t^{\I}}$ with the sequence $(\sigma_n(t))_{n\geq \sub{\xi}_t^{\I}}$ when evaluating random characteristics. 

\section{Second moment method: proof of Proposition \ref{prop-condmoments}}
\label{sec-momentMethod}
\subsection{First conditional moment asymptotics}
\label{sec-cond-nkfirst}
In this section, we investigate the first conditional moment of $N^{\sss(m)}_k(t)$ with respect to the bulk filtration. In particular, consider a function $x$ such that, as $t\rightarrow\infty$, $x(t)\rightarrow\infty$ and $x(t)= o(t)$. Heuristically, we want to show that 
\eqn{
\label{for-heur-1}
	m\E\left[N^{\sss(m)}_k(t) ~|~ \F_{x(t)}\right] \approx N^{\sss(m)}(x(t))\E\left[\sub{\xi}^{\Phi^{\sss(m)}_k}_{t-x(t)}\right].
}
Equation \eqref{for-heur-1} shows that, conditionally on the information up to time $x(t)$, at time $t$ we have $N^{\sss(m)}(x(t))$ processes, each one producing the expected number of vertices with degree $k$ at time $t-x(t)$. This follows from the fact that all the individual processes in $\sub{\xi}$ are independent from each other once we condition on the birth times.

We start writing $N^{\sss(m)}_k(t)$ as sum of indicator functions, i.e.,
$$
	\E\left[N^{\sss(m)}_k(t) ~|~ \F_{x(t)}\right] = \E\left[\left.\sum_{n=1}^{N^{\sss(m)}(x(t))}\I_{\{\Din_n(t)=k\}}+\sum_{n=N^{\sss(m)}(x(t))+1}^\infty \I_{\{\Din_n(t)=k\}}\right|\F_{x(t)}\right].
$$
We can ignore the first sum in the conditional expectation, since
\eqn{
\label{for-boundfirstInd}
	0\leq \e^{-\alpha^* t}\E\left[\left.\sum_{n=1}^{N^{\sss(m)}(x(t))}\I_{\{\Din_n(t)=k\}}\right|\F_{x(t)}\right]\leq \e^{-\alpha^* t}N^{\sss(m)}(x(t)),
}
and, using Theorem \ref{the-BPmain} and the fact that $x(t) = o(t)$,
\eqn{
\label{for-boundfirstInd2}
	\e^{-\alpha^*(t-x(t))}\e^{-\alpha^* x(t)}N^{\sss(m)}(x(t))\stackrel{\pr-a.s.}{\longrightarrow}0.
}
Consider the sequence $(\sigma_n(x(t)))_{n\in\N}^{t\geq0}$ as defined in Section \ref{sec-remark-btim}. This is a sequence of random variables that approximates $(\tau_n)_{n\in\N}$ and it is measurable with respect to the bulk filtration. This means that we can write, for any $n\geq N^{\sss(m)}(x(t))$, 
$$
	\Din_n(t) = \xi^{(n,1)}(t-\sigma_{(n,1)}(x(t)))+\cdots+\xi^{(n,m)}(t-\sigma_{(n,m)}(x(t))).
$$
Now, conditionally on the birth times $\sigma_{(n,1)}(x(t)),\ldots,\sigma_{(n,m)}(x(t))$, the $m$ processes related to the $n$-th vertex  $(\xi^{(n,1)}_t)_{t\geq0},\ldots,(\xi^{(n,m)}_t)_{t\geq0}$ are independent, so the probability that the sum is equal to $k$ is
\eqn{
	\left(P[\xi](t-\sigma_{(n,1)}(x(t)))*\cdots*P[\xi](t-\sigma_{(n,m)}(x(t)))\right)_k,
}
which is a $x$-bulk measurable random variable. As a consequence, 
\eqn{
\label{for-sumConv1mom}
\begin{split}
	& \E\left[\left.\sum_{n=N^{\sss(m)}(x(t))+1}^\infty \I_{\{\Din_n(t)=k\}}\right|\F_{x(t)}\right] \\
	& = \sum_{n= N^{\sss(m)}(x(t))+1}^\infty
		\left(P[\xi](t-\sigma_{(n,1)}(x(t)))*\cdots*P[\xi](t-\sigma_{(n,m)}(x(t)))\right)_k.
\end{split}
}
For any $k\in\N$, the function $u\mapsto P_k[\xi](u)$ is zero for negative argument. As a consequence, the sum in \eqref{for-sumConv1mom} is taken only over indeces $n$ such that $\sigma_{(n,j)}(x(t))<t$. From the definition of $\sigma_{(n,j)}(x(t))$ as in \eqref{for-sigmaDEF} and the fact that $(n,j) = m(n-1)+j$, it follows that $\sigma_{(n,j)}(x(t))<t$ if and only if 
\eqn{
\label{for-indecesCOnv1mom}
	n<1+j/m+\e^{\alpha^*(t-x(t))}\sub{\xi}^{\I_{\R^+}}_{x(t)}/m = \e^{\alpha^*(t-x(t))}N^{\sss(m)}(x(t))(1+o_{a.s.}(1)),
}
where $o_{a.s.}(1)$ denotes a term that converges $\pr$-a.s. to zero.
Using \eqref{for-indecesCOnv1mom} and then applying Proposition \ref{prop-apprfixt}, for $L$ as in Lemma \ref{lem-differencetime}, we obtain
\eqn{
\label{for-sumtodivide}
 \sum_{n = N^{\sss(m)}(x(t))+1}^{N^{\sss(m)}(x(t))\e^{\alpha^*(t-x(t))}}P[\xi](t-\sigma_{(n,1)}(x(t)))^{*m}_k + Lm\sum_{n = N^{\sss(m)}(x(t))+1}^{N^{\sss(m)}(x(t))\e^{\alpha^*(t-x(t))}}\sigma_{(n,m)}(x(t))-\sigma_{(n,1)}(x(t)),
}
where the difference between \eqref{for-sumConv1mom} and the first sum in \eqref{for-sumtodivide} is bounded in absolute value by the second sum in \eqref{for-sumtodivide}.

Consider the difference $t-\sigma_{(n,1)}(x(t))$. Using the definition of the sequence $(\sigma_n(x(t)))_{n\in\N}$, and recalling that $mN^{\sss(m)}(x(t)) =\sub{\xi}^{\I_{\R^+}}_{x(t)}(1+o_{a.s.}(1))$ (see \eqref{eq-dimens}), it follows that $t-\sigma_{(N^{\sss(m)}(x(t)),1)}(x(t)) = (t-x(t))(1+o_{a.s.}(1))$. As a consequence, ignoring negligible terms, 
\eqn{
\label{for-rewriteTimes}
\begin{split}
	t-\sigma_{(n,1)}(x(t)) & = t-\sigma_{(N^{\sss(m)}(x(t)),1)}(x(t))- \left(\sigma_{(n,1)}(x(t))-\sigma_{(N^{\sss(m)}(x(t)),1)}(x(t))\right) \\
	& =t-x(t)+\frac{1}{\alpha^*}\log\left(\frac{m(n-1)+1}{mN^{\sss(m)}(x(t))}\right)\\
&  = t-x(t)+\frac{1}{\alpha^*}\log\left(\frac{n}{N^{\sss(m)}(x(t))}\right).
\end{split}
}
The second sum in the right hand side of \eqref{for-sumtodivide} is bounded by a telescopic sum,  since $\sigma_{(n,1)}(x(t))\geq \sigma_{(n-1,m)}(x(t))$, which implies that we can bound it with the difference between the last and the first term. 
Using \eqref{for-rewriteTimes} in \eqref{for-sumtodivide}, for $s=t-x(t)$,  it leads to
\eqn{
\label{for-doublesum}
\begin{split}
	&\sum_{n = N^{\sss(m)}(x(t))+1}^{N^{\sss(m)}(x(t))\e^{\alpha^*s}}P[\xi](s-\frac{1}{\alpha^*}\log\left(\frac{m(n-1)+1}{mN^{\sss(m)}(x(t))}\right))^{*m}_k +\frac{mL}{\alpha^*}\log\left(\frac{mN^{\sss(m)}(x(t))\e^{\alpha^* s}}{mN^{\sss(m)}(x(t))}\right) \\
	& =  \sum_{p=1}^{\e^{\alpha^* s}}\sum_{q=1}^{N^{\sss(m)}(x(t))}P[\xi](s-\frac{1}{\alpha^*}\log\left(
		p+q/N^{\sss(m)}(x(t))\right))^{*m}_k + mL(t-x(t))\\
		& = N^{\sss(m)}(x(t))\sum_{p=1}^{\e^{\alpha^* s}}P[\xi]\left(s-\frac{1}{\alpha^*}\log(p)\right)^{*m}_k + mL(t-x(t))\\
		& = N^{\sss(m)}(x(t))\sum_{p=1}^{\e^{\alpha^* s}}\E\left[\Phi^{\sss(m)}_k\left(s-\frac{1}{\alpha^*}\log(p)\right)\right] + mL(t-x(t)).
\end{split}
}
We can ignore the term $ mL(t-x(t))$, since $\e^{-\alpha^* t} mL(t-x(t)) = o(1)$. To analyze the remaining sum, we introduce two measures
 $\gamma_1$ and $\gamma_2$ on $\R^+$. For $v\geq 0$, 
$$
	\gamma_1([0,v]) = \int_0^v\sum_{p\in\N}\delta_{\{1/\alpha^* \log p\}}(du)= \e^{\alpha^*v},~~\mbox{and}~~
	\gamma_2([0,v]) = \E\left[\int_0^v \sum_{n\in\N}\delta_{\{\tau_n\}}(du)\right] = \E\left[\sub{\xi}_v^{\I_{\R^+}}\right].
$$
Notice that $\gamma_2$ is the average measure of the random measure given by the branching process size. From Theorem \ref{the-BPmain} we know that $\gamma_2([0,v]) = \E[\sub{\xi}_v^{\I_{\R^+}}] = (1/\mu\alpha^*)\e^{\alpha^* v}(1+o(1))$. This means that, asymptotically in $v$, $\gamma_1([0,v]) = \mu\alpha^* \gamma_2([0,v])$. Using these two measures it is possible to write
\eqn{
\label{for-equalE}
\begin{split}
	\sum_{p=1}^{\e^{\alpha^* s}}\E\left[\Phi^{\sss(m)}_k\left(s-\frac{1}{\alpha^*}\log(p)\right)\right] & = \int_0^s \E[\Phi^{\sss(m)}_k(s-u)]\gamma_1(du) \\
	& = \mu\alpha^* \int_0^s \E[\Phi^{\sss(m)}_k(s-u)]\gamma_2(du) = \mu\alpha^*\E\left[\sub{\xi}^{\Phi^{\sss(m)}_k}_s\right].  
	\end{split}
}
Using \eqref{for-equalE} in \eqref{for-doublesum}, we conclude that
\eqn{
\begin{split}
	\e^{-\alpha^* t}\E\left[N^{\sss(m)}_k(t) ~|~ \F_{x(t)}\right] & = \e^{-\alpha^* t}\mu\alpha^*N^{\sss(m)}(x(t))\E\left[\sub{\xi}^{\Phi^{\sss(m)}_k}_{t-x(t)}\right] +o_{a.s.}(1) \\
	& = \left(\mu\alpha^*\e^{-\alpha^* x(t)}N^{\sss(m)}(x(t))\right)\left(\e^{-\alpha^* (t-x(t))}\E\left[\sub{\xi}^{\Phi^{\sss(m)}_k}_{t-x(t)}\right]\right) +o_{a.s.}(1).
\end{split}
}
Applying \eqref{th-expogrowth-f1} it follows that, as $t\rightarrow\infty$, $\mu\alpha^*\e^{-\alpha^* x(t)}N(x(t))$ converges $\pr$-a.s. to $\Theta$, while $\mu\alpha^*\e^{-\alpha^* (t-x(t))}\E\left[\sub{\xi}^{\Phi^{\sss(m)}_k}_{t-x(t)}\right]$ converges to $\La(\Phi^{\sss(m)}_k(\cdot))(\alpha^*)/\mu$. This completes the proof of \eqref{for-aim-2}.

\subsection{Conditional second moment asymptotics}
\label{sec-cond-nksecond}
In this section, we prove \eqref{for-aim-3}, i.e., the result on the conditional second moment of $N^{\sss(m)}_k(t)$. 
We again write $N^{\sss(m)}_k(t)$ as sum of indicator functions, which means
\eqn{
	\e^{-2\alpha^* t}\E\left[\left.N^{\sss(m)}_k(t)^2\right|\F_{x(t)}\right] = \e^{-2\alpha^* t}\E\left[\left.
		\sum_{n,n'\in\N}\I_{\{\Din_n(t)=k\}}\I_{\{\Din_{n'}(t)=k\}}\right|\F_{x(t)}\right].
}
We now divide the sum in different sums, according to the indices $n$ and $n'$, as 
\eqn{
\label{for-splitsum}
\begin{split}
	&\sum_{n,n'\leq N^{\sss(m)}(x(t))}\I_{\{\Din_n(t)=k\}}\I_{\{\Din_{n'}(t)=k\}}\\
	& +\sum_{n,n'>N^{\sss(m)}(x(t))}\I_{\{\Din_n(t)=k\}}\I_{\{\Din_{n'}(t)=k\}}+2\sum_{n\leq N^{\sss(m)}(x(t)),n'>N^{\sss(m)}(x(t))}\I_{\{\Din_n(t)=k\}}\I_{\{\Din_{n'}(t)=k\}}.
\end{split}
}
For the first sum in \eqref{for-splitsum}, we use \eqref{for-boundfirstInd} as bound, and by \eqref{for-boundfirstInd2} it is $o_{a.s.}(1)$.
For the second sum in \eqref{for-splitsum}, we again use the sequence $(\sigma_n(x(t)))_{n\in\N}$ to approximate the birth times. Using similar arguments as in Section \ref{sec-cond-nkfirst}, and the fact that conditionally on the birth times all the birth processes are independent, we write, for $n\neq n'$ and $n,n'>N^{\sss(m)}(x(t))$,
\eqn{
\label{for2mom-splitProb}
\begin{split}
\pr\left(\Din_n(t)=k,\Din_{n'}(t)=k~|~\F_{x(t)}\right) & = \left(P[\xi](t-\sigma_{(n,1)}(x(t)))*\cdots*P[\xi](t-\sigma_{(n,m)}(x(t)))\right)_k\\
& \quad \times\left(P[\xi](t-\sigma_{(n',1)}(x(t)))*\cdots*P[\xi](t-\sigma_{(n',m)}(x(t)))\right)_k.
\end{split}
}
We can use \eqref{for2mom-splitProb} to bound the conditional expectation of the second sum in  \eqref{for-splitsum}. In fact, adding the missing terms we can write
\eqn{
\begin{split}
	&\E\bigg[\sum_{n,n'>N^{\sss(m)}(x(t))}\I_{\{\Din_n(t)=k\}}\I_{\{\Din_{n'}(t)=k\}}
				\bigg|\F_{x(t)}\bigg]\\
	& \leq \bigg( \sum_{n>N^{\sss(m)}(x(t))}\left(P[\xi](t-\sigma_{(n,1)}(x(t)))*\cdots*P[\xi](t-\sigma_{(n,m)}(x(t)))\right)_k\bigg)^2 + \E\left[N^{\sss(m)}_k(t)\bigg|\F_{x(t)}\right]\\\
& = 	\E\bigg[\sum_{n>N^{\sss(m)}(x(t))}\I_{\{\Din_n(t)=k\}}\bigg|\F_{x(t)}\bigg]^2+\E\left[\left.N^{\sss(m)}_k(t)\right|\F_{x(t)}\right] \\
& \leq \E\left[\left.N^{\sss(m)}_k(t)\right|\F_{x(t)}\right]^2 +\E\left[\left.N^{\sss(m)}_k(t)\right|\F_{x(t)}\right].
\end{split}
}
The third sum in \eqref{for-splitsum} can be easily bound by $2N^{\sss(m)}(x(t))\E[N^{\sss(m)}_k(t)|\F_{x(t)}]$. Putting together the three bounds we obtained, we have that $\e^{-2\alpha^* t}\E\left[N^{\sss(m)}_k(t)^2|\F_{x(t)}\right]$ is bounded by
\eqn{
\label{for-last2mom}
	\e^{-2\alpha^* t}\E[N_k(t)|\F_{x(t)}]^2 + 
		\e^{-2\alpha^* t}\left(2N(x(t))+1\right)\E[N_k(t)|\F_{x(t)}]+ o_{a.s.}(1).
}
The result follows since the second term in  \eqref{for-last2mom} is again $o_{a.s.}(1)$, similarly to the first term in \eqref{for-splitsum}. 

\section{Proofs of corollaries \ref{cor-PAM}, \ref{cor-RRG} and \ref{cor-aging}}
\subsection{Corollaries \ref{cor-PAM} and \ref{cor-RRG}}
In Section \ref{sec-results} we already showed that CBPs defined by birth processes as in Definition \ref{def-emb_birthpr} embeds the PAM in continuous-time  and what we called random recursive graph. We just need to show that Condition \ref{cond-bounded} is satisfied. In general, processes defined as in Definition \ref{def-emb_birthpr} are differentiable and satisfy a recursive property (see \cite[Section 3.2]{athrBook}):
\eqn{
\label{for-derivative-PAM}
\frac{d}{dt}P_0[\xi](t) = -\lambda_0P_0[\xi](t),\quad \mbox{ and, for }k\geq1,\quad 
	\frac{d}{dt}P_k[\xi](t) = -\lambda_k P_k[\xi](t)+\lambda_{k-1}P_{k-1}[\xi](t).
}
Since in general we consider a non-decreasing sequence $(\lambda_k)_{k\in\N}$, it is possible to see that if we set $L(k) = \lambda_k$ then Condition \ref{cond-bounded} is satisfied. Hence, the limiting degree distribution $(p_k^{\sss(m)})_{k\in\N}$ is the distribution of the sum of $m$ independent copies of $(\xi_t)_{t\geq0}$ at exponential time $T_{\alpha^*}$, for $\alpha^*$ Malthusian parameter of the CTBP. 

In the case of the PAM embedding, the sum of $m$ birth processes is distributed as an embedding birth process defined by the PA rule $\bar{\lambda}_k = k+m+\delta$ (it is easy to prove this by induction over the distribution of birth times).
This implies that we can use known results on this type of birth processes (\cite{RudValko},\cite{Athr}) to write
$$
	p^{\sss(m)}_k = \pr\left(\xi^1_{T_{\alpha^*}}+\cdots+\xi^m_{T_{\alpha^*}}=k\right) = \frac{\alpha^*}{\alpha^* + k+m+\delta}\prod_{i=0}^{k-1}\frac{i+m+\delta}{\alpha^*+i+m+\delta},
$$ 
that can be rewritten as in \eqref{for-PAMdeg} using $\Gamma$ functions, since in this case $\alpha^* = 1+\delta/m$ (see \cite[Section 4.2]{RudValko},  \cite[Proposition 3.15]{GarvdHW}). 

For the random recursive graph, calculations are easier. It is easy to show that in this case $\alpha^*=1$. Since the sum of $m$ Poisson processes (PP) with parameter 1 is a PP with parameter $m$, the limiting degree distribution is the distribution of a 
PP at an exponentially distributed time with parameter 1. Then
\eqn{
	p^{\sss(m)}_k = \E\left[\e^{-mT_1}\frac{(mT_1)^k}{k!}\right] = \frac{1}{m+1}\left(1+\frac{1}{m}\right)^{-k}.
}
As mentioned, for $m=1$ (so without collapsing) the random recursive graph reduces to the random recursive tree, and the limiting distribution is just $p^{\sss(1)}_k = 2^{-(k+1)}$ (see \cite{Janson}).

\subsection{The aging case}
Here we prove the result on aging processes stated in Corollary \ref{cor-aging}. The result follows immediately from the proof of Corollary \ref{cor-PAM} and the definition of the aging process. In fact, an aging process is defined as $(\xi_{G(t)})_{t\geq0}$, where $(\xi_t)_{t\geq0}$ is an embedding process defined by the sequence $(k+1+\delta/m)_{k\in\N}$. As a simple consequence of the chain rule, from \eqref{for-derivative-PAM} it follows that
\eqn{
	\frac{d}{dt}P_k[\xi](G(t)) = \left( -(k+1+\delta/m) P_k[\xi](t)+(k+\delta/m)P_{k-1}[\xi](t)\right)g(t).
}
Assuming that the aging function $g$ is bounded almost everywhere, Condition \ref{cond-bounded} is satisfied for $L = k\sup_{t\geq0}|g(t)|$. The condition $\lim_{t\rightarrow\infty}\E[\xi_{G(t)}]>1$ is necessary and sufficient for the existence of the Malthusian parameter $\alpha^*$ (see \cite[Lemma 4.1]{GarvdHW}). 

Since the sum of $m$ processes $\xi_t^1+\cdots+\xi_t^m$ is distributed as a single embedding process  defined by the sequence $(k+m+\delta)_{k\in\N}$, it follows that $\xi_{G(t)}^1+\cdots+\xi_{G(t)}^m$ is distributed as a single aging process with the same aging function $g$ and  sequence $(k+m+\delta)_{k\in\N}$. \eqref{for-distrAging-CBP} is then a consequence of \cite[Proposition 5.2]{GarvdHW}.

\section{Discrete-time processes: proof of Theorem \ref{th:PAMdiscrete}}
\label{sec:discrete}
The convergence result given in Theorem \ref{the-limitCBP} assures that in continuous time, the proportion of vertices in CBP with degree $k$ converges in probability to $p^{\sss(m)}_k$. When considering a CTBP in the presence of aging, this result is enough since these types of CBPs are defined only in continuous time.

When we instead consider embedding processes as in Definition \ref{def-emb_birthpr}, we can consider a discrete-time sequence of random  graphs $(\CBP^{\sss(m)}_{\tau_n})_{n\in\N}$, where $({\tau_n})_{n\in\N}$ is the sequence of birth times of the corresponding CTBP. This is the way the PAM is usually defined. In particular, the sequence $(\tau_n)_{n\in\N}$ corresponds to the sequence of times at which a new edge appears in the CBP. In this setting, the convergence in probability given in Theorem \ref{the-limitCBP} does not imply the convergence in probability of $(m\e^{-\alpha^* \tau_n}N_k^{\sss(m)}(\tau_n))_{n\in\N}$. Here, we will prove that $\e^{-\alpha^* \tau_n}N^{\sss(m)}_k(\tau_n)$ converges in probability to $p^{\sss(m)}_k\Theta/(\mu\alpha^*)$, and that this further implies that $N^{\sss(m)}_k(\tau_{mn})/n$ converges in probability to $p^{\sss(m)}_k$, as required.

Recall the $t$-bulk sigma-field. We denote, as in \eqref{for-sigmaDEF}, for $n\geq \sub{\xi}^{\I}_t$, 
$$
	\sigma_n=\sigma_n(t)=\frac{1}{\alpha^*} \log{n} -\frac{1}{\mu\alpha^*}\Theta_t.
$$
Take $t=t_n=(\log{n})^{1/2}$. Then, define the sequence $(\tau'_n)_{n\in\N}$, where $\tau'_n := \sigma_n(t_n)$. 
Notice that $\tau'€™_n$ is $t_n$-bulk-measurable. Further, $\tau'_n\stackrel{a.s.}{\rightarrow}\infty$ and 
$$
	\frac{t_n}{\tau'_n} =
		\frac{(\log n)^{1/2}}{\frac{1}{\alpha^*}\log n -\frac{1}{\mu\alpha^*}\log \Theta_{t_n}} = \frac{(\log n)^{1/2}}{\log n(1/\alpha^*-\log \Theta_{t_n}/(\mu\alpha^* \log n))}\stackrel{a.s.}{\longrightarrow}0. 
$$
By Remark \ref{remark:bulk:function}, Proposition \ref{prop-condmoments} holds for $m\e^{-\alpha^* \tau'_n}N_k^{\sss(m)}(\tau'_n)$, so that $m\e^{-\alpha^* \tau'_n}N_k^{\sss(m)}(\tau'_n)\stackrel{\pr}{\rightarrow}p^{\sss(m)}_k\Theta/(\mu\alpha^*)$. The advantage of the sequence $(\tau'_n)_{n\in\N}$, other than being $t_n$-bulk measurable, is that it is a good approximation of the sequence $(\tau_n)_{n\in\N}$. Indeed,
\eqn{
	|\tau_n-\tau'_n|\leq \left|\tau_n-\frac{1}{\alpha^*}\log n-\frac{1}{\mu\alpha^*}\log\Theta\right|+
			\left|\frac{1}{\mu\alpha^*}\log\Theta-\frac{1}{\mu\alpha^*}\log\Theta_{t_n}\right|,
} 
so that $|\tau_n-\tau'_n|\stackrel{a.s.}{\rightarrow}0$. As a consequence,  also $m\e^{-\alpha^* \tau_n}N_k^{\sss(m)}(\tau_n)\stackrel{\pr}{\rightarrow}p^{\sss(m)}_k\Theta/(\mu\alpha^*)$. 

By Theorem \ref{theo-general}, we further know that  $m\e^{-\alpha^* t}N^{\sss(m)}(t)\stackrel{a.s.}{\rightarrow}\Theta/(\mu\alpha^*)$, so this holds also for $m\e^{-\alpha^* \tau_n}N^{\sss(m)}(\tau_n)$. As a consequence,
\eqn{
	\frac{m\e^{-\alpha^* \tau_n}N_k^{\sss(m)}(\tau_n)}{m\e^{-\alpha^* \tau_n}N^{\sss(m)}(\tau_n)} = \frac{N_k^{\sss(m)}(\tau_n)}{N^{\sss(m)}(\tau_n)} = \frac{m}{n}N_k^{\sss(m)}(\tau_n)\stackrel{\pr}{\longrightarrow}p^{\sss(m)}_k.
}
Consequently, $N_k^{\sss(m)}(\tau_{mn})/n\stackrel{\pr}{\longrightarrow}p^{\sss(m)}_k$. This completes the proof of Theorem \ref{th:PAMdiscrete}.
\qed

\bigskip

\noindent
{\bfseries Acknowledgments.}
This work is supported in part by the Netherlands Organisation for Scientific Research (NWO) through the Gravitation {\sc Networks} grant 024.002.003. The work of RvdH is further supported by the Netherlands Organisation for Scientific Research (NWO) through VICI grant 639.033.806.

{\footnotesize \printbibliography[title=References, heading = bibintoc]}
\end{document}